\documentclass{amsart}
\usepackage{amsfonts}
\usepackage{amsmath,amscd,amsthm}
\usepackage{amssymb}
\usepackage{mathrsfs}

\usepackage[all,cmtip]{xy}
\usepackage{multirow}
\usepackage{enumerate}
\usepackage{stmaryrd}
\usepackage[pdf]{pstricks}

\psset{unit=1pt}
\psset{arrowsize=4pt 1}
\psset{linewidth=.5pt}

\newtheorem{theorem}{Theorem}[section]
\newtheorem{lemma}[theorem]{Lemma}
\newtheorem{proposition}[theorem]{Proposition}
\newtheorem{corollary}[theorem]{Corollary}

\theoremstyle{definition}
\newtheorem{remark}[theorem]{Remark}
\newtheorem{example}[theorem]{Example}

\def\A{\mathbb{A}}
\def\P{\mathbb{P}}

\def\Z{\mathbb{Z}}

\def\L{\mathbb{L}}

\DeclareMathOperator{\im}{im}

\DeclareMathOperator{\divisor}{div}
\DeclareMathOperator{\Pic}{Pic}
\DeclareMathOperator{\Div}{Div}
\DeclareMathOperator{\CDiv}{CDiv}

\DeclareMathOperator{\Sym}{Sym}

\DeclareMathOperator{\pt}{pt}
\DeclareMathOperator{\tr}{tr}
\DeclareMathOperator{\Rees}{Rees}
\DeclareMathOperator{\gr}{gr}
\DeclareMathOperator{\pr}{pr}
\DeclareMathOperator{\charact}{char}
\DeclareMathOperator{\cone}{cone}

\begin{document}

\title{Formal Group Rings of Toric Varieties}
\author{Wanshun Wong}
\address{Department of Mathematics and Statistics\\ University of Ottawa \\ Ontario \\ Canada}
\thanks{The author is supported by the NSERC Discovery grant 385795-2010 and NSERC DAS grant 396100-2010 of Kirill Zaynullin}
\email{wanshunwong@gmail.com}
\date{}
\keywords{Formal Group Law, Toric Variety, Oriented Cohomology}
\subjclass[2000]{14F43, 14M25}
\begin{abstract}
In this paper we use formal group rings to construct an algebraic model of the $T$-equivariant oriented cohomology of smooth toric varieties. Then we compare our algebraic model with known results of equivariant cohomology of toric varieties to justify our construction. Finally we construct the algebraic counterpart of the pull-back and push-forward homomorphisms of blow-ups.
\end{abstract}

\maketitle

\section{Introduction}

Let $\mathsf{h}$ be an algebraic oriented cohomology theory in the sense of Levine-Morel \cite{LM}, where examples include the Chow group of algebraic cycles modulo rational equivalence, algebraic $K$-theory, connective $K$-theory, elliptic cohomology, and a universal such theory called algebraic cobordism. It is known that to any $\mathsf{h}$ one can associate a one-dimensional commutative formal group law $F$ over the coefficient ring $R = \mathsf{h}(\pt)$, given by
\[
c_1(\mathscr{L}_1 \otimes \mathscr{L}_2) = F(c_1(\mathscr{L}_1), c_1(\mathscr{L}_2))
\]
for any line bundles $\mathscr{L}_1, \mathscr{L}_2$ on a smooth variety $X$, where $c_1$ is the first Chern class.

Let $T$ be a split algebraic torus which acts on a smooth variety $X$. An object of interest is the $T$-equivariant cohomology ring $\mathsf{h}_T(X)$ of $X$, and we would like to build an algebraic model for it. More precisely, instead of using geometric methods to compute $\mathsf{h}_T(X)$, we want to use algebraic methods to construct another ring $A$ (our algebraic model), such that $A$ should be easy to compute and it gives information about $\mathsf{h}_T(X)$. Of course the best case is that $A$ and $\mathsf{h}_T(X)$ are isomorphic. 

Let $T^*$ be the character lattice of our torus $T$. A new combinatorial object, called a formal group ring and denoted by $R \llbracket T^* \rrbracket_F$, can then be defined by using $R, F$ and $T^*$. It serves as an algebraic model of the completed $T$-equivariant cohomology ring $\mathsf{h}_T(\pt)^\wedge$ of a point, or of the cohomology ring of the classifying space $\mathsf{h}(BT)$ of $T$. Various computations can then be performed on formal group rings to provide algebraic models of the (usual) cohomology ring $\mathsf{h}(G/B)$ and the $T$-equivariant cohomology ring $\mathsf{h}_T(G/B)$ of a homogeneous space $G/B$, where $G$ is a split semisimple linear algebraic group with a maximal split torus $T$, and $B$ is a Borel subgroup containing $T$. We refer to \cite{CPZ}, \cite{CZZ1} and \cite{CZZ2} for details.

The main goal of this paper is to apply the techniques of formal group rings to a smooth toric variety $X$, so that we obtain algebraic models of the usual cohomology and the $T$-equivariant cohomology of $X$. Since any toric variety can be constructed by gluing affine toric varieties together, our idea is to find a model of the $T$-equivariant cohomology for each affine toric variety and then ``glue" them together. This paper is organized as follows. First we establish notation and recall basic facts on toric varieties and formal group rings in Section 2. Then in Section 3 we prove our main result of gluing formal group rings together, and the output of gluing will be our models of the usual cohomology and the $T$-equivariant cohomology of $X$. Next we compare our models with known results of equivariant cohomology in Section 4. Finally in Section 5 we define the algebraic counterpart of the pull-back and push-forward homomorphisms of blow-ups.

\section{Notation and Preliminaries}
\subsection{Toric Varieties}
Our references for the theory of toric varieties are \cite{CLS} and \cite{F}, and also \cite{ELFST} for the definition of a toric variety over an arbitrary base.

Let $T$ be a split torus over our base field $k$. The character and cocharacter lattices of $T$ are denoted by $T^*$ and $T_*$ respectively, and there is a perfect pairing $\langle \mbox{ }, \mbox{ } \rangle: T^* \times T_* \rightarrow \Z$. A toric variety $X$ is a normal variety on which the split torus $T$ acts faithfully with an open dense orbit. Recall that $X$ is determined by its fan $\Sigma$ in the lattice $T_*$. In this paper we will always assume $X$ is smooth unless otherwise stated, so that every cone $\sigma$ in $\Sigma$ is generated by a subset of a basis of $T_*$. Let $\Sigma_{max}$ be the set of maximal cones of $\Sigma$. The set of all rays (i.e. one-dimensional cones) in $\Sigma$ is denoted by $\Sigma(1)$, and similarly the set of all rays in $\sigma$ is denoted by $\sigma(1)$ for every cone $\sigma \in \Sigma$.

For every cone $\sigma \in \Sigma$, let $U_\sigma$ be the associated open affine subscheme of $X$, and $O_\sigma$ be the $T$-orbit corresponding to $\sigma$ under the Orbit-Cone Correspondence. The stabilizer of any geometric point of $O_\sigma$ is a subtorus $T_\sigma \subseteq T$, so that $O_\sigma \cong T/T_\sigma$. The character and cocharacter lattices of $T_\sigma$ are given by $T_\sigma^* = T^*/\sigma^\perp$ and $T_{\sigma *} = \langle \sigma \rangle = \sigma + (-\sigma) \subseteq N$ respectively, and $\dim T_\sigma = \dim \sigma$. Finally, for every ray $\rho \in \Sigma(1)$, the unique generator of the monoid $\rho$ is denoted by $v_\rho \in T_*$.

\subsection{Formal Group Rings}
Our main reference for formal group rings is \cite{CPZ}.

Let $R$ be a commutative ring, and $F$ be a formal group law over $R$ which we always assume to be one-dimensional and commutative, i.e. $F(x,y) \in R\llbracket x, y \rrbracket$ is a formal power series such that
\[
F(x,0) = 0, \mbox{ } F(x,y) = F(y,x), \mbox{ and } F(x,F(y,z)) = F(F(x,y),z),
\]
see \cite[p.4]{LM}. For any nonnegative integer $n$ we use the notation
\[
x +_F y = F(x,y), \mbox{ } n \cdot_F x = \underbrace{x +_F \cdots +_F x}_{n \mbox{ copies}}, \mbox{ and } (-n) \cdot_F x = -_F (n \cdot_F x)
\]
where $-_F x$ denotes the formal inverse of $x$, i.e. $x +_F (-_F x) = 0$.

Let $M$ be an abelian group, and let $R[x_M]$ denote the polynomial ring over $R$ with variables indexed by $M$. Let $\epsilon: R[x_M] \rightarrow R$ be the augmentation map which sends $x_\lambda$ to $0$ for every $\lambda \in M$. We denote $R \llbracket x_M \rrbracket$ the $\ker(\epsilon)$-adic completion of the polynomial ring $R[x_M]$. Let $J_F \subseteq R \llbracket x_M \rrbracket$ be the closure of the ideal generated by $x_0$ and $x_{\lambda + \mu} - (x_\lambda +_F x_\mu)$ over all $\lambda, \mu \in M$. The formal group ring (also called formal group algebra) is then defined to be the quotient
\[
R \llbracket M \rrbracket_F = R \llbracket x_M \rrbracket / J_F.
\]
By abuse of notation the class of $x_\lambda$ in $R \llbracket M \rrbracket_F$ is also denoted by $x_\lambda$. By definition $R \llbracket M \rrbracket_F$ is a complete Hausdorff $R$-algebra with respect to the $\ker(\epsilon')$-adic topology, where $\epsilon': R \llbracket M \rrbracket_F \rightarrow R$ is the induced augmentation map.

An important subring of $R \llbracket M \rrbracket_F$ is the image of $R[x_M]$ under the composition $R[x_M] \rightarrow R \llbracket x_M \rrbracket \rightarrow R \llbracket M \rrbracket_F$, denoted by $R[M]_F$. Then $R \llbracket M \rrbracket_F$ is the completion of $R[M]_F$ at the ideal $\ker(\epsilon') \cap R[M]_F$.

\begin{example}
(see \cite[Example 2.19]{CPZ}) The additive formal group law over $R$ is given by $F(x,y) = x + y$. In this case we have $R$-algebra isomorphisms
\[
R[M]_F \cong \Sym_R(M) \mbox{ and } R \llbracket M \rrbracket_F \cong \Sym_R(M)^\wedge,
\]
where $\Sym_R(M)$ is the ring of symmetric powers of $M$ over $R$, and the completion is at the kernel of the augmentation map $x_\lambda \mapsto 0$. The isomorphisms are given by $x_\lambda \mapsto \lambda \in \Sym_R(M)$.
\end{example}

\begin{example}
(see \cite[Example 2.20]{CPZ}) The multiplicative periodic formal group law over $R$ is given by $F(x,y) = x + y - \beta xy$, where $\beta$ is a unit in $R$. Let $R[M]$ be the (usual) group ring $R[M] = \{\sum r_i e^{\lambda_i} \mbox{ } | \mbox { } r_i \in R, \lambda_i \in M\}$ written in the exponential notation, and $\tr: R[M] \rightarrow R$ be the trace map which sends $e^\lambda$ to $0$ for every $\lambda \in M$. Then there are the following $R$-algebra isomorphisms
\[
R[M]_F \cong R[M] \mbox{ and } R \llbracket M \rrbracket_F \cong R[M]^\wedge
\]
where the completion is at $\ker(\tr)$, and the isomorphisms are given by sending $x_\lambda$ to $\beta^{-1}(1 - e^\lambda)$.
\end{example}

Finally we remark that given $\phi: M \rightarrow M'$ a homomorphism of abelian groups, it induces ring homomorphisms $R \llbracket M \rrbracket_F \rightarrow R \llbracket M' \rrbracket_F$ and $R[M]_F \rightarrow R[M']_F$ by sending $x_\lambda$ to $x_{\phi(\lambda)}$.

\section{Formal Group Rings of Toric Varieties}
Let $X$ be the smooth toric variety of the fan $\Sigma \subseteq T_*$. For every cone $\sigma \in \Sigma$, since $\sigma$ is smooth we have $U_\sigma = \A^{\dim \sigma} \times T/T_\sigma$. Therefore if $F$ is the formal group law associated to some oriented cohomology theory $\mathsf{h}$ over the coefficient ring $R = \mathsf{h}(\pt)$, by homotopy invariance the formal group ring $R \llbracket T_\sigma^* \rrbracket_F$ can be viewed as an algebraic substitute of the completed equivariant cohomology ring $\mathsf{h}_T(U_\sigma)^\wedge$, see \cite[Remark 2.22]{CPZ}. It is known that a topology can be given to the fan $\Sigma$ by defining the open sets to be subfans of $\Sigma$ (see for example \cite[Section 7.2]{GK}). Our goal is to ``glue" $R \llbracket T_\tau^* \rrbracket_F$ over all maximal cones $\tau \in \Sigma_{max}$ together as a sheaf on $\Sigma$, and the ring of global sections of this sheaf will then be an algebraic model of $\mathsf{h}_T(X)^\wedge$.

For any toric variety $X$, there is a natural isomorphism
\begin{equation}
\label{cdivisom}
\CDiv_T(X) \cong \ker \left(\bigoplus_{\tau \in \Sigma_{max}} T_{\tau}^* \longrightarrow \bigoplus_{\tau \neq \tau'} T_{\tau \cap \tau'}^*\right)
\end{equation}
where $\CDiv_T(X)$ is the group of $T$-invariant Cartier divisors, and the map on the right hand side is given by the difference of the two natural projections $T_{\tau}^*, T_{\tau'}^* \rightarrow T_{\tau \cap \tau'}^*$ on each summand. The idea is that for every $T$-invariant Cartier divisor, its restriction to $U_{\tau}$ is equal to the divisor of a character, and the obvious compatability condition holds (see \cite[Chapter 4.2]{CLS}). 

For every ray $\rho \in \Sigma(1)$, the closure of the corresponding orbit $\overline{O_\rho}$ is a $T$-invariant prime divisor on $X$, and we will denote it by $D_\rho$. The group of $T$-invariant Weil divisors $\Div_T(X)$ is a lattice generated by $D_\rho$, i.e.
\[
\Div_T(X) = \bigoplus_{\rho \in \Sigma(1)} \Z \cdot D_\rho.
\]

Since we assume $X$ is smooth, all Weil divisors are Cartier. Hence
\[
\CDiv_T(X) = \Div_T(X) = \bigoplus_{\rho \in \Sigma(1)} \Z \cdot D_\rho.
\]
For every character $\alpha \in T^*$, it determines a $T$-invariant principal Cartier divisor $\divisor(\alpha) = \sum_{\rho \in \Sigma(1)} \langle \alpha, v_\rho \rangle D_\rho$. This defines a group homomorphism $T^* \rightarrow \CDiv_T(X)$. Passing to the formal group rings, we have
\[
\begin{tabular}{ccl}
$R\llbracket T^* \rrbracket_F$ & $\longrightarrow$ & $R\llbracket \CDiv_T(X) \rrbracket_F$ \\
$x_\alpha$ & $\longmapsto$ & $x_{\sum \langle \alpha, v_\rho \rangle D_\rho} = \sum [\langle \alpha, v_\rho\rangle]_F x_{D_\rho}$.
\end{tabular}
\]
Therefore $R\llbracket \CDiv_T(X) \rrbracket_F$ is a $R\llbracket T^* \rrbracket_F$-algebra. Clearly, $R\llbracket T_\sigma^* \rrbracket_F$ is also a $R\llbracket T^* \rrbracket_F$-algebra under the natural map for every cone $\sigma \in \Sigma$. 

Consider the group homomorphism $\CDiv_T(X) \rightarrow T_\sigma^*$ defined by 
\[
\xymatrix{\sum_{\rho \in \Sigma(1)} n_\rho D_\rho \ar@{|->}[r] & \sum_{\rho \in \sigma(1)} n_\rho \alpha_{\sigma,\rho}}
\]
where $\{\alpha_{\sigma,\rho} \mbox{ }|\mbox{ } \rho \in \sigma(1)\}$ is the basis of $T_\sigma^*$ dual to $\{v_\rho \mbox{ }|\mbox{ } \rho \in \sigma(1)\}$. We remark that the above homomorphism is different from the one in literature by a minus sign. Again passing to formal group rings we obtain a map $R\llbracket \CDiv_T(X) \rrbracket_F \rightarrow R\llbracket T_\sigma^* \rrbracket_F$.

\begin{lemma}
\label{lemma1}
The map $R\llbracket \CDiv_T(X) \rrbracket_F \rightarrow R\llbracket T_\sigma^* \rrbracket_F$ constructed above is a $R\llbracket T^* \rrbracket_F$-algebra homomorphism.
\end{lemma}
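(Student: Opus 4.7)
The plan is to unwind the statement using the functoriality of the formal group ring construction noted at the end of Section 2: a homomorphism of abelian groups $\phi : M \to M'$ induces an $R$-algebra homomorphism $R\llbracket M \rrbracket_F \to R\llbracket M' \rrbracket_F$ sending $x_\lambda$ to $x_{\phi(\lambda)}$. Since both structure maps $R\llbracket T^* \rrbracket_F \to R\llbracket \CDiv_T(X) \rrbracket_F$ and $R\llbracket T^* \rrbracket_F \to R\llbracket T_\sigma^* \rrbracket_F$ are themselves obtained from group homomorphisms (the divisor map $\divisor: T^* \to \CDiv_T(X)$ and the quotient $T^* \twoheadrightarrow T^*/\sigma^\perp = T_\sigma^*$ respectively), and the map under consideration also comes from a group homomorphism, it is enough to check commutativity of the triangle
\[
\xymatrix{T^* \ar[r]^-{\divisor} \ar[dr] & \CDiv_T(X) \ar[d] \\ & T_\sigma^*}
\]
at the level of abelian groups: the induced triangle of formal group rings will then commute automatically, and the map in question will be an $R\llbracket T^* \rrbracket_F$-algebra homomorphism.

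So I would pick an arbitrary character $\alpha \in T^*$ and compute the two composites. The upper route sends $\alpha$ first to $\sum_{\rho \in \Sigma(1)} \langle \alpha, v_\rho\rangle D_\rho$ and then, by definition of the second map, to $\sum_{\rho \in \sigma(1)} \langle \alpha, v_\rho\rangle \alpha_{\sigma,\rho} \in T_\sigma^*$. The direct route sends $\alpha$ to its class $\bar\alpha \in T_\sigma^* = T^*/\sigma^\perp$. Equality of these two elements is exactly the statement that, under the perfect pairing $T_\sigma^* \times T_{\sigma*} \to \Z$, the coefficient of $\alpha_{\sigma,\rho}$ in $\bar\alpha$ equals $\langle \alpha, v_\rho\rangle$; this holds because $\{v_\rho \mid \rho \in \sigma(1)\}$ is by assumption a basis of $T_{\sigma*} = \langle \sigma\rangle$ with dual basis $\{\alpha_{\sigma,\rho}\}$, and the pairing on $T_\sigma$ is induced from the one on $T$ (the rays $v_\rho$ for $\rho \in \sigma(1)$ lie in $T_{\sigma *} \subseteq T_*$, so pairing with them factors through $T^* \to T_\sigma^*$).

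There is no real obstacle here; the main point to keep clean is the bookkeeping between the two pairings. If one wishes to be fully explicit, one can spell out the induced ring map on generators: $x_{D_\rho} \mapsto x_{\alpha_{\sigma,\rho}}$ if $\rho \in \sigma(1)$ and $x_{D_\rho} \mapsto 0$ otherwise, so that the image of $x_{\divisor(\alpha)} = \sum_\rho [\langle \alpha, v_\rho\rangle]_F x_{D_\rho}$ (using the formal group law to expand $x_{\sum n_\rho D_\rho}$) becomes $\sum_{\rho \in \sigma(1)} [\langle \alpha, v_\rho\rangle]_F x_{\alpha_{\sigma,\rho}}$, which by the same expansion identity is $x_{\bar\alpha}$, matching the image of $x_\alpha$ under $R\llbracket T^*\rrbracket_F \to R\llbracket T_\sigma^*\rrbracket_F$.
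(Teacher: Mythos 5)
Your proposal is correct and follows essentially the same route as the paper: reduce via functoriality of formal group rings to the commutativity of the triangle $T^* \to \CDiv_T(X) \to T_\sigma^*$ of abelian groups, and verify it on an arbitrary character $\alpha$, with both composites landing on $\sum_{\rho \in \sigma(1)} \langle \alpha, v_\rho\rangle \alpha_{\sigma,\rho}$. The extra detail you supply about the dual basis and the compatibility of the pairings is a welcome elaboration of what the paper dismisses as ``clearly commutative,'' but it is not a different argument.
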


\begin{proof}
By functoriality of formal group rings it suffices to show that 
\[
\xymatrix{\CDiv_T(X) \ar[rr] & & T_\sigma^* \\
& T^* \ar[lu] \ar[ru] & \\}
\]
is commutative. For every $\alpha \in T^*$, we have
\[
\xymatrix{\sum_{\rho \in \Sigma(1)} \langle \alpha, v_\rho \rangle D_\rho \ar@{|->}[rr] & & \sum_{\rho \in \sigma(1)} \langle \alpha, v_\rho \rangle \alpha_{\sigma,\rho} \\
& \alpha \ar@{|->}[lu] \ar@{|->}[ru] & \\}
\]
which is clearly commutative.
\end{proof}

Now we are ready to do the ``gluing". Motivated by the isomorphism \eqref{cdivisom}, we consider the following sequence
\begin{equation}
\label{sheaf}
\xymatrix{R\llbracket \CDiv_T(X) \rrbracket_F  \ar[r]^-\psi & \displaystyle \prod_{\tau \in \Sigma_{max}} R\llbracket T_{\tau}^* \rrbracket_F \ar[r]^-\pi & \displaystyle \prod_{\tau \neq \tau'} R\llbracket T_{\tau \cap \tau'}^* \rrbracket_F}
\end{equation}
where $\psi$ is the product of the maps in Lemma \ref{lemma1}, $\pi$ is given by the product of the differences of $\pr_{\tau, \tau \cap \tau'} : R\llbracket T_{\tau}^* \rrbracket_F \rightarrow R\llbracket T_{\tau \cap \tau'}^* \rrbracket_F$, $\pr_{\tau', \tau \cap \tau'} : R\llbracket T_{\tau'}^* \rrbracket_F \rightarrow R\llbracket T_{\tau \cap \tau'}^* \rrbracket_F$. (Here it involves a choice between $\pr_{\tau, \tau \cap \tau'} - \pr_{\tau', \tau \cap \tau'}$ and $\pr_{\tau', \tau \cap \tau'} - \pr_{\tau, \tau \cap \tau'}$. However we are only interested in the kernel so it does not matter.) Note that $\pi$ is only a module homomorphism but not an algebra homomorphism.

\begin{proposition}
\label{smoothprop}
The sequence \eqref{sheaf} is an exact sequence of $R\llbracket T^* \rrbracket_F$-modules.
\end{proposition}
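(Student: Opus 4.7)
The plan is to prove exactness at the middle term, which amounts to the two containments $\im \psi \subseteq \ker \pi$ and $\ker \pi \subseteq \im \psi$. I do not expect $\psi$ to be injective in general: already for $X = \P^2$, the monomial $x_{D_{\rho_1}} x_{D_{\rho_2}} x_{D_{\rho_3}}$ is killed by every $\psi_\tau$ because each maximal cone omits one ray. So ``exact sequence'' in the statement must be read as exact at the middle term only.

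The first containment $\im \psi \subseteq \ker \pi$ is the commutativity of \eqref{cdivisom} transferred to formal group rings. By functoriality of $R \llbracket - \rrbracket_F$ I only have to verify that the two compositions $\CDiv_T(X) \to T_\tau^* \to T_{\tau \cap \tau'}^*$ and $\CDiv_T(X) \to T_{\tau'}^* \to T_{\tau \cap \tau'}^*$ agree on abelian groups. Tracing a generator $D_\rho$ through both gives $\alpha_{\tau \cap \tau', \rho}$ if $\rho \in (\tau \cap \tau')(1) = \tau(1) \cap \tau'(1)$ and $0$ otherwise, which is precisely the kernel condition in \eqref{cdivisom}.

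For $\ker \pi \subseteq \im \psi$, I will exploit the fact that every lattice in sight is free with a distinguished basis. Since $T_\sigma^*$ is free on $\{\alpha_{\sigma, \rho}\}_{\rho \in \sigma(1)}$ and $\CDiv_T(X) = \bigoplus_{\rho \in \Sigma(1)} \Z \cdot D_\rho$, every formal group ring in \eqref{sheaf} is identified with a genuine power series ring $R \llbracket y_\rho : \rho \in S \rrbracket$ for the appropriate subset $S \subseteq \Sigma(1)$. Moreover, under these identifications each $\psi_\tau$ and each projection $\pr_{\tau, \tau \cap \tau'}$ is simply ``kill the variables indexed by rays outside the target, keep the remaining $y_\rho$''. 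Given $(g_\tau) \in \ker \pi$, I expand each $g_\tau = \sum_\alpha c_{\tau, \alpha} y^\alpha$ in multi-index notation with $\alpha \colon \tau(1) \to \Z_{\geq 0}$; the compatibility $\pr_{\tau, \tau \cap \tau'}(g_\tau) = \pr_{\tau', \tau \cap \tau'}(g_{\tau'})$ forces $c_{\tau, \alpha} = c_{\tau', \alpha}$ whenever $\mathrm{supp}(\alpha) \subseteq \tau(1) \cap \tau'(1)$. Define $c_\alpha$ to be this common value when $\mathrm{supp}(\alpha)$ sits in some $\tau(1)$ and $c_\alpha := 0$ otherwise; then $f := \sum_\alpha c_\alpha y^\alpha \in R \llbracket \CDiv_T(X) \rrbracket_F$ satisfies $\psi_\tau(f) = g_\tau$ for every $\tau$ by construction.

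The main place that deserves care is verifying the ``delete a variable'' description of the projection maps: the surjection $T_\tau^* \twoheadrightarrow T_{\tau \cap \tau'}^* = T^*/(\tau \cap \tau')^\perp$ must send $\alpha_{\tau, \rho}$ to $\alpha_{\tau \cap \tau', \rho}$ when $\rho \in (\tau \cap \tau')(1)$ and to $0$ when $\rho \in \tau(1) \setminus (\tau \cap \tau')(1)$. This reduces to the observation that $\alpha_{\tau, \rho}$, viewed as an element of $T^*$, pairs to $1$ with $v_\rho$ and to $0$ with the other ray generators of $\tau$, so it lies in $(\tau \cap \tau')^\perp$ precisely when $\rho \notin (\tau \cap \tau')(1)$. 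Once that is in place, the remainder is coefficient bookkeeping on power series, as above.
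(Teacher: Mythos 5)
Your proof is correct. It shares the paper's essential setup: smoothness is used to identify every formal group ring in \eqref{sheaf} with an honest power series ring in variables indexed by rays, under which $\psi$ and the maps $\pr_{\tau,\tau\cap\tau'}$ become the coordinate projections killing the variables attached to rays outside the target cone (your verification that $\alpha_{\tau,\rho}\mapsto\alpha_{\tau\cap\tau',\rho}$ or $0$ is exactly the point the paper leans on via \cite[Corollary 2.13]{CPZ}), and your reading of ``exact'' as exactness at the middle term matches the paper, which computes $\ker\psi=I_\Sigma\neq 0$ separately. Where you genuinely diverge is the construction of a preimage of a compatible tuple $(g_\tau)\in\ker\pi$: the paper lifts each $g_\tau$ and each of its restrictions to multiple intersections into the big power series ring and assembles the preimage as the alternating inclusion--exclusion sum $\sum_i g_i-\sum_{i<j}g_{ij}+\cdots$, then checks that this telescopes correctly under each projection; you instead glue coefficientwise, defining $c_\alpha$ as the common coefficient over all maximal cones whose rays contain $\mathrm{supp}(\alpha)$, and $0$ otherwise. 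The two arguments are equivalent but place the burden differently: yours reduces everything to the well-definedness of each $c_\alpha$, which the compatibility condition over the relevant pair of maximal cones gives immediately, and it makes transparent that the claim is just the sheaf condition for power series rings on the poset of subsets of $\Sigma(1)$; the paper's alternating sum avoids naming coefficients and follows a \v{C}ech-style template, at the cost of the telescoping verification. Your convention $c_\alpha=0$ for supports not contained in any cone is consistent with, and essentially re-proves, Proposition \ref{srideal}.
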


\begin{proof}
i) First, it follow from the functoriality of formal group rings that $\im(\psi) \subseteq \ker(\pi)$.

ii) To show $\ker(\pi) \subseteq \im(\psi)$, let $\tau_1, \ldots, \tau_d$ be the maximal cones of $\Sigma$, and let $(f_i)$ be any element in $\ker(\pi) \subseteq \prod_i R\llbracket T_{\tau_i}^* \rrbracket_F$. Then we define $f_{ij}$ to be the image of $f_i$ in $R\llbracket T_{\tau_i \cap \tau_j}^* \rrbracket_F$ (which is the same as the image of $f_j$), $f_{ijk}$ to be the image of $f_i$ in $R\llbracket T_{\tau_i \cap \tau_j \cap \tau_k}^* \rrbracket_F$, and so on. 

By \cite[Corollary 2.13]{CPZ}, for every cone $\sigma$ we identify $R\llbracket T_\sigma^* \rrbracket_F$ with the ring of power series $R\llbracket x_{\alpha_{\sigma,\rho}} \rrbracket$ with variables $x_{\alpha_{\sigma,\rho}}, \rho \in \sigma(1)$, where we recall that $\{\alpha_{\sigma,\rho} \mbox{ }|\mbox{ } \rho \in \sigma(1)\}$ is the basis of $T_\sigma^*$ dual to $\{v_\rho \mbox{ }|\mbox{ } \rho \in \sigma(1)\}$. Since $X$ is smooth, under this identification for every face $\mu$ of $\sigma$, the natural maps $R\llbracket T_{\sigma}^* \rrbracket_F \rightarrow R\llbracket T_{\mu}^* \rrbracket_F$ coincides with the canonical projection of the rings of power series $R\llbracket x_{\alpha_{\sigma,\rho}} \rrbracket \rightarrow R\llbracket x_{\alpha_{\mu,\rho}} \rrbracket$, $x_{\alpha_{\sigma,\rho}} \mapsto x_{\alpha_{\mu,\rho}}$ if $\rho \in \mu(1)$, $x_{\alpha_{\sigma,\rho}} \mapsto 0$ if $\rho \notin \mu(1)$. Similarly we identify $R\llbracket \CDiv_T(X) \rrbracket_F$ with $R\llbracket x_{D_\rho} \rrbracket$ with variables $x_{D_\rho}, \rho \in \Sigma(1)$. Then $\psi$ coincides with product of the projections $R\llbracket x_{D_\rho} \rrbracket \rightarrow R\llbracket x_{\alpha_{\tau_i,\rho}} \rrbracket$.

For every $i$, let $g_i$ be the unique preimage of $f_i$ under the projection $R\llbracket x_{D_\rho} \rrbracket \rightarrow R\llbracket x_{\alpha_{\tau_i,\rho}} \rrbracket$, such that $g_i$ does not involve any $x_{D_\rho}$ for $\rho \notin \tau_i(1)$. Informally speaking, $g_i$ is obtained from $f_i$ by replacing all $x_{\alpha_{\tau_i,\rho}}$ with corresponding $x_{D_\rho}$. We define $g_{ij}, \ldots, g_{12\cdots d}$ in the same way.

Finally we define 
\[
g = \sum_i g_i - \sum_{i < j} g_{ij} + \sum_{i < j < k} g_{ijk} - \cdots + (-1)^{d+1} g_{12\cdots d} \in R\llbracket x_{D_\rho} \rrbracket.
\]
Then for example under the projection $R\llbracket x_{D_\rho} \rrbracket \rightarrow R\llbracket x_{\alpha_{\tau_1,\rho}} \rrbracket$,
\begin{multline*}
g = g_1 + \left(\sum_{1<i} g_i - \sum_{1<j} g_{1j}\right) + \left(-\sum_{1<i<j} g_{ij} + \sum_{1 < j < k} g_{1jk}\right) + \cdots + (-1)^{d+1} g_{12\cdots r} \\
\longmapsto f_1 + 0 + 0 + \cdots + 0 = f_1.
\end{multline*}
So $(f_i) = \psi(g) \in \im(\psi)$.
\end{proof}

Under the above identifications of the formal group rings with the rings of power series, we can have an explicit description of $\ker(\psi)$.

\begin{proposition}
\label{srideal}
$\ker(\psi)$ is equal to $I_\Sigma$, the ideal generated by the square-free monomial $\prod_{\rho \in S} x_{D_\rho}$ over all subsets $S \subseteq \Sigma(1)$ such that $S \nsubseteq \sigma(1)$ for any cone $\sigma$.
\end{proposition}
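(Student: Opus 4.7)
The plan is to work with the identification from Proposition \ref{smoothprop}, under which $R\llbracket \CDiv_T(X) \rrbracket_F$ becomes the power series ring $R\llbracket x_{D_\rho} \rrbracket$ with variables indexed by $\Sigma(1)$, and $\psi$ becomes the product, over $\tau \in \Sigma_{max}$, of coordinate projections $\psi_\tau : R\llbracket x_{D_\rho} \rrbracket \to R\llbracket x_{\alpha_{\tau,\rho}} \rrbracket_{\rho \in \tau(1)}$ sending $x_{D_\rho}$ to $x_{\alpha_{\tau,\rho}}$ when $\rho \in \tau(1)$ and to $0$ otherwise. For a multi-index $\alpha$ I will write $\mathrm{supp}(\alpha) = \{\rho : \alpha_\rho > 0\}$ and $x^\alpha = \prod_\rho x_{D_\rho}^{\alpha_\rho}$.

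For the inclusion $I_\Sigma \subseteq \ker(\psi)$ I would fix a generator $\prod_{\rho \in S} x_{D_\rho}$ with $S \nsubseteq \sigma(1)$ for every cone $\sigma \in \Sigma$. Then for every maximal cone $\tau$ in particular $S \nsubseteq \tau(1)$, so some $\rho \in S$ satisfies $\psi_\tau(x_{D_\rho}) = 0$, which kills the generator at the $\tau$-component. Hence the generator lies in $\ker(\psi)$.

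For the reverse inclusion, given $f = \sum_\alpha c_\alpha x^\alpha \in \ker(\psi)$, I would split $f = f' + f''$ where $f''$ collects the monomials whose supports are not contained in $\sigma(1)$ for any cone of $\Sigma$; both are well-defined power series since each total degree contains only finitely many monomials. Each monomial in $f''$ is divisible by the square-free product $\prod_{\rho \in \mathrm{supp}(\alpha)} x_{D_\rho}$, which by construction is a generator of $I_\Sigma$, so $f'' \in I_\Sigma \subseteq \ker(\psi)$. Consequently $f' = f - f'' \in \ker(\psi)$, and it suffices to show $f' = 0$.

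The key step is that for each maximal cone $\tau$, the component $\psi_\tau(f')$ equals $\sum_{\mathrm{supp}(\alpha) \subseteq \tau(1)} c_\alpha \prod_\rho x_{\alpha_{\tau,\rho}}^{\alpha_\rho}$, since monomials with support not inside $\tau(1)$ are killed by $\psi_\tau$. Distinct multi-indices $\alpha$ supported on $\tau(1)$ produce distinct, and hence $R$-linearly independent, monomials in $R\llbracket x_{\alpha_{\tau,\rho}} \rrbracket_{\rho \in \tau(1)}$, so the vanishing of $\psi_\tau(f')$ forces $c_\alpha = 0$ for every $\alpha$ with $\mathrm{supp}(\alpha) \subseteq \tau(1)$. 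By construction every monomial of $f'$ has support contained in $\sigma(1)$ for some cone $\sigma$, hence in $\tau(1)$ for some maximal cone $\tau$, so letting $\tau$ range over $\Sigma_{max}$ annihilates every coefficient of $f'$ and yields $f' = 0$, whence $f = f'' \in I_\Sigma$. The main obstacle is setting up this power-series decomposition carefully and ensuring that coefficients of $f'$ can be read off faithfully from the individual $\psi_\tau$; once the bookkeeping is in place the argument reduces to the observation that $\psi_\tau$ is injective on the $R$-span of monomials supported in $\tau(1)$.
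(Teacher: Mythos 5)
Your proof is correct and follows essentially the same route as the paper's: both inclusions rest on the observation that, under the power-series identification, each component $\psi_\tau$ kills exactly the monomials whose support meets $\Sigma(1)\setminus\tau(1)$ and is injective on the $R$-span of the monomials supported in $\tau(1)$. The paper phrases the reverse inclusion by arguing directly that every monomial of $f$ must map to zero in every component, whereas you first split off the part of $f$ already lying in $I_\Sigma$; this is only a cosmetic reorganization of the same argument.
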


\begin{proof}
i) $\ker(\psi) \supseteq I_\Sigma$: Let $x_{D_{\rho_1}} \cdots x_{D_{\rho_t}}$ be a generator of $I_\Sigma$. For every maximal cone $\tau_i$, by construction $\rho_j \notin \tau_i(1)$ for some $1 \leq j \leq t$. Therefore $x_{D_{\rho_j}} \mapsto 0 \in R\llbracket x_{\alpha_{\tau_i,\rho}} \rrbracket$, and $x_{D_{\rho_1}} \cdots x_{D_{\rho_t}} \mapsto 0$ as well.

ii) $\ker(\psi) \subseteq I_\Sigma$: For every $f \in \ker(\psi)$, write $f$ as a linear combination of monomials in $x_{D_\rho}$. Then for every maximal cone $\tau_i$, $f \mapsto 0 \in R\llbracket x_{\alpha_{\tau_i,\rho}} \rrbracket$ implies each of these monomials also maps to $0$, which means each of them contain some $x_{D_\rho}$ for $\rho \notin \tau_i(1)$. Hence these monomials and $f$ are in $I_\Sigma$.
\end{proof}

\begin{remark}
$I_\Sigma$ is power series version of the standard Stanley-Reisner ideal. The underlying geometric meaning follows from the Orbit-Cone Correspondence: It is known that $\rho$ is a face of $\sigma$ if and only if $O_\sigma \subseteq D_\rho$. Therefore given $S = \{\rho_1, \ldots, \rho_t \} \subseteq \Sigma(1)$, we have $S \nsubseteq \sigma(1)$ for any cone $\sigma$ if and only if $D_{\rho_1} \cap \cdots \cap D_{\rho_t} = \emptyset$ in $X$.
\end{remark}

\begin{corollary}
\label{coro}
$\ker(\pi) = \im(\psi) = R\llbracket x_{D_\rho} \rrbracket/I_\Sigma$.
\end{corollary}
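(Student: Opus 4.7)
The plan is to read off the corollary directly from the two preceding results. The exactness statement of Proposition \ref{smoothprop} gives the first equality $\ker(\pi) = \im(\psi)$ for free, so the only remaining content is to identify $\im(\psi)$ with the quotient $R\llbracket x_{D_\rho}\rrbracket / I_\Sigma$.

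For that second equality, I would apply the first isomorphism theorem to the $R\llbracket T^*\rrbracket_F$-algebra homomorphism $\psi$: we have
\[
\im(\psi) \;\cong\; R\llbracket \CDiv_T(X)\rrbracket_F / \ker(\psi).
\]
Now invoke the identification $R\llbracket \CDiv_T(X)\rrbracket_F \cong R\llbracket x_{D_\rho}\rrbracket$ (with variables indexed by $\rho\in\Sigma(1)$) that was set up in the proof of Proposition \ref{smoothprop} via \cite[Corollary 2.13]{CPZ}, together with Proposition \ref{srideal}, which identifies $\ker(\psi) = I_\Sigma$. Substituting yields $\im(\psi) \cong R\llbracket x_{D_\rho}\rrbracket / I_\Sigma$.

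The only point that needs a moment of care is that $\psi$ is a map of complete topological $R$-algebras, so one should make sure the bare algebraic first isomorphism theorem really suffices here — and it does, because we are making a purely algebraic statement about the abstract $R$-module (or $R$-algebra) structure of $\im(\psi)$, not asserting anything about topologies. So no additional work (such as checking closedness of the image, or completeness of the quotient) is required to state the conclusion. This is the main — essentially only — thing one has to be honest about; otherwise the corollary is a one-line consequence of Propositions \ref{smoothprop} and \ref{srideal}.
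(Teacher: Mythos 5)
Your argument is correct and is exactly the intended one: the paper states Corollary \ref{coro} without proof precisely because it is the immediate combination of Proposition \ref{smoothprop} (giving $\ker(\pi)=\im(\psi)$), Proposition \ref{srideal} (giving $\ker(\psi)=I_\Sigma$), and the first isomorphism theorem under the identification $R\llbracket \CDiv_T(X)\rrbracket_F \cong R\llbracket x_{D_\rho}\rrbracket$ already set up in the proof of Proposition \ref{smoothprop}. Your remark that only the abstract algebraic isomorphism is being asserted, so no topological care about closedness of the image is needed, is also accurate.
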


By ``gluing" we have constructed an algebraic model for the completed equivariant cohomology ring $\mathsf{h}_T(X)^\wedge$. Recall that by an algebraic model we mean a ring that can be computated by purely algebraic methods and is closely related to $\mathsf{h}_T(X)^\wedge$.

\begin{theorem}
\label{gluing}
$R\llbracket \CDiv_T(X) \rrbracket_F/I_\Sigma$ is our algebraic model for the completed equivariant cohomology ring $\mathsf{h}_T(X)^\wedge$.
\end{theorem}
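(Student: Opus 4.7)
The plan is to recognize that Theorem~\ref{gluing} is the synthesis of the preceding constructions, and to justify it in three coordinated steps: identifying the ring $R\llbracket \CDiv_T(X) \rrbracket_F/I_\Sigma$ as the global sections of a natural sheaf on the fan, explaining why that sheaf of rings constitutes an ``algebraic model'' in the sense of the introduction, and isolating what would need to be established in order to upgrade the claim to a literal isomorphism.

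First I would endow $\Sigma$ with the subfan topology (as in \cite[Section 7.2]{GK}) and define a presheaf by assigning $R\llbracket T_\tau^* \rrbracket_F$ to the open subfan generated by a maximal cone $\tau$, with restriction along $\tau\cap\tau'$ given by the natural projection $\pr_{\tau,\tau\cap\tau'}$ already used in \eqref{sheaf}. The sheaf condition for the cover of $\Sigma$ by its maximal cones is precisely the exactness at the middle term of \eqref{sheaf}, so the ring of global sections is $\ker(\pi)$. By Corollary~\ref{coro}, this kernel coincides with $\im(\psi)$, and by the explicit description of $\psi$ in the proof of Proposition~\ref{smoothprop} together with Proposition~\ref{srideal}, one has $\im(\psi) = R\llbracket \CDiv_T(X)\rrbracket_F/I_\Sigma$. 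This yields the identification asserted in the theorem.

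Second, I would justify why this ring deserves the label ``algebraic model of $\mathsf{h}_T(X)^\wedge$''. By the remark opening Section~3, each local piece $R\llbracket T_\tau^* \rrbracket_F$ is an algebraic surrogate for $\mathsf{h}_T(U_\tau)^\wedge$, via homotopy invariance applied to $U_\tau \cong \A^{\dim \tau} \times T/T_\tau$. The maps $\pr_{\tau,\tau\cap\tau'}$ mirror the equivariant pull-back maps along the open immersions $U_{\tau\cap\tau'} \hookrightarrow U_\tau$, so the sheaf on $\Sigma$ is the algebraic counterpart of the presheaf $U \mapsto \mathsf{h}_T(U)^\wedge$ restricted to the open affine cover $X = \bigcup_\tau U_\tau$. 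The global sections of our sheaf therefore encode exactly the data that a Mayer--Vietoris-type descent would assemble into $\mathsf{h}_T(X)^\wedge$.

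The principal obstacle, and the reason the statement is phrased as a declaration rather than a ring isomorphism, is that a literal identification $R\llbracket \CDiv_T(X)\rrbracket_F/I_\Sigma \cong \mathsf{h}_T(X)^\wedge$ for an arbitrary oriented cohomology theory in the sense of Levine--Morel would require an equivariant Mayer--Vietoris property that is not part of the axioms of \cite{LM}. The present theorem is therefore best read as introducing an algebraic object, manifestly computable, whose agreement with $\mathsf{h}_T(X)^\wedge$ in the classical cases (Chow groups, $K$-theory, and so on) is what Section~4 is intended to verify.
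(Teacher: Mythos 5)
Your proposal is correct and follows essentially the same route as the paper: it assembles the exact sequence from Proposition~\ref{smoothprop} and Corollary~\ref{coro}, reads it as the sheaf axiom for the cover of $\Sigma$ by the subfans of the maximal cones, and identifies $R\llbracket \CDiv_T(X) \rrbracket_F/I_\Sigma$ as the ring of global sections. Your added discussion of why the local pieces model $\mathsf{h}_T(U_\tau)^\wedge$ and of the gap separating the model from a literal isomorphism is consistent with the paper's own framing (the remarks surrounding the theorem) and does not change the argument.
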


\begin{proof}
By Proposition \ref{smoothprop} and Corollary \ref{coro} we have the following exact sequence of $R\llbracket T^* \rrbracket_F$-modules
\[
\xymatrix{0 \ar[r] & R\llbracket \CDiv_T(X) \rrbracket_F/I_\Sigma \ar[r] & \displaystyle \prod_{\tau \in \Sigma_{max}} R\llbracket T_{\tau}^* \rrbracket_F \ar[r] & \displaystyle \prod_{\tau \neq \tau'} R\llbracket T_{\tau \cap \tau'}^* \rrbracket_F}.
\]
Notice that this is precisely the exact sequence of the sheaf axiom, where the subfans induced by $\tau$, $\tau$ varies over $\Sigma_{max}$, form an open covering of $\Sigma$. Hence the $R\llbracket T^* \rrbracket_F$-algebra $R\llbracket \CDiv_T(X) \rrbracket_F/I_\Sigma$ is the ring of global sections and is our algebraic model of $\mathsf{h}_T(X)^\wedge$. We remark that similar exact sequences for equivariant singular cohomology and equivariant $K$-theory can be found in \cite[Chapter 12]{CLS} and \cite{AHW} respectively. 
\end{proof}

\begin{remark}
Let $k$ be a field of characteristic 0 and $\mathsf{h} = \Omega^*$ be the algebraic cobordism. The formal group law associated to $\Omega^*$ is the univeral formal group law, and the coefficient ring is the Lazard ring $\L$. As we see in Example \ref{ufgl}, our algebraic model $\L \llbracket \CDiv_T(X) \rrbracket_F/I_\Sigma$ is isomorphic to $\Omega^*_T(X)$. Since $\Omega^*$ is the universal oriented cohomology theory, it follows from the functoriality of formal group rings that we will have isomorphisms between $R\llbracket \CDiv_T(X) \rrbracket_F/I_\Sigma$ and $\mathsf{h}_T(X)^\wedge$ for all other oriented cohomology theories as well.

One of the main advantages of our construction is that it still works when $k$ is of characteristic $p > 0$. We are still able to construct an algebraic model for $\mathsf{h}_T(X)^\wedge$, while on the other hand there is no universal theory for us to specialize from.
\end{remark}

Next we would like to study the usual cohomology of $X$. Recall that $R$ is a $R\llbracket T^* \rrbracket_F$-algebra via the augmentation map. Then we have an isomorphism
\[
(R\llbracket \CDiv_T(X) \rrbracket_F/I_\Sigma) \otimes_{R\llbracket T^* \rrbracket_F} R \mbox{ } \cong \mbox{ } R\llbracket \CDiv_T(X) \rrbracket_F/J_\Sigma
\]
where $J_\Sigma$ is the ideal generated $I_\Sigma$ and $\sum_{\rho \in \Sigma(1)} [\langle \alpha, v_\rho\rangle]_F x_{D_\rho}$ over all $\alpha \in T^*$. This construction corresponds to the idea that the usual cohomology ring is a quotient of the equivariant cohomology ring, where the corresponding results for Chow group, algebraic $K$-theory and algebraic cobordism are proved in \cite[Corollary 2.3]{B2}, \cite[Proposition 28]{Me} and \cite[Theorem 8.1]{KU} respectively.

It is known that there is the following exact sequence
\[
\xymatrix{T^* \ar[r] & \CDiv_T(X) \ar[r] & \Pic(X) \ar[r] & 0}
\]
where the first homomorphism is defined before Lemma \ref{lemma1}, and the second homomorphism sends a $T$-invariant Cartier divisor to its class in the Picard group. 

\begin{lemma}
\label{lemmapicgp}
$R\llbracket \CDiv_T(X) \rrbracket_F \otimes_{R\llbracket T^* \rrbracket_F} R$ is isomorphic to $R\llbracket \Pic(X) \rrbracket_F$.
\end{lemma}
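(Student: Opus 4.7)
The plan is to identify both sides as the same quotient of $R\llbracket \CDiv_T(X) \rrbracket_F$. Since $R \cong R\llbracket T^* \rrbracket_F / \ker(\epsilon')$ via the augmentation $\epsilon'$, the tensor product rewrites as
\[
R\llbracket \CDiv_T(X) \rrbracket_F \otimes_{R\llbracket T^* \rrbracket_F} R \;\cong\; R\llbracket \CDiv_T(X) \rrbracket_F / J_0,
\]
where $J_0$ is the closed ideal generated by the image of $\ker(\epsilon')$ under the structure map $R\llbracket T^* \rrbracket_F \to R\llbracket \CDiv_T(X) \rrbracket_F$. Because $\ker(\epsilon')$ is topologically generated by $\{x_\alpha : \alpha \in T^*\}$ and each $x_\alpha$ maps to $x_{\phi(\alpha)}$ (with $\phi: T^* \to \CDiv_T(X)$ the divisor-of-a-character map), $J_0$ is the closed ideal generated by $\{x_m : m \in \phi(T^*)\}$.

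By the exactness of $T^* \to \CDiv_T(X) \to \Pic(X) \to 0$, one has $\phi(T^*) = \ker(\CDiv_T(X) \to \Pic(X))$, so the lemma reduces to the following general fact: for any surjection $\pi: M \to M''$ of abelian groups with kernel $K$, the induced map $\pi_*: R\llbracket M \rrbracket_F \to R\llbracket M'' \rrbracket_F$ is surjective with kernel equal to the closed ideal $I = (x_k : k \in K)^{\mathrm{cl}}$. Applied to $M=\CDiv_T(X)$ and $M''=\Pic(X)$, this identifies $R\llbracket \CDiv_T(X) \rrbracket_F / J_0$ with $R\llbracket \Pic(X) \rrbracket_F$.

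For the general fact, functoriality gives $x_k \mapsto x_0 = 0$, so $I \subseteq \ker(\pi_*)$ and $\pi_*$ factors through $R\llbracket M \rrbracket_F / I$. For the inverse, I would pick a set-theoretic section $s: M'' \to M$ of $\pi$ with $s(0) = 0$ and define a map on generators by $x_{m''} \mapsto \overline{x_{s(m'')}}$. To see this respects the defining relations of $R\llbracket M'' \rrbracket_F$, note that $k_0 := s(m_1'' + m_2'') - s(m_1'') - s(m_2'') \in K$, whence in $R\llbracket M \rrbracket_F$ one has
\[
x_{s(m_1''+m_2'')} \;=\; x_{s(m_1'')} +_F x_{s(m_2'')} +_F x_{k_0},
\]
and the last term vanishes in $R\llbracket M \rrbracket_F / I$. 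Since the construction sends the augmentation ideal of $R[M'']_F$ into that of $R\llbracket M \rrbracket_F / I$, it extends by continuity to $R\llbracket M'' \rrbracket_F$, and a short check (using the same trick $m = s(\pi(m)) + (m - s(\pi(m)))$ with $m - s(\pi(m)) \in K$) confirms that this extension is inverse to the induced $\pi_*$.

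The main technical obstacle is careful bookkeeping of topologies: one must verify that the image of the closed augmentation ideal $\ker(\epsilon')$ in $R\llbracket \CDiv_T(X) \rrbracket_F$ really coincides with the closed ideal topologically generated by $\{x_{\phi(\alpha)}\}$, and that the section-based construction extends continuously to the completion. Both rest on the continuity of the structure map and the fact that the relevant augmentation ideals are generated (topologically) by the given $x$-variables, so nothing deeper than unwinding definitions is required.
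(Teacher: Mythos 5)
Your proof is correct, but it takes a genuinely different route from the paper. The paper identifies $T^*$ and $\CDiv_T(X)$ with $\Z^m$ and $\Z^{m'}$, puts the matrix of $T^* \to \CDiv_T(X)$ into Smith normal form, and then reads off both sides in coordinates: the tensor product becomes $R\llbracket x'_1,\dots,x'_{m'}\rrbracket/\langle a_i \cdot_F x'_i\rangle$, which is matched against the presentation of $R\llbracket \Pic(X)\rrbracket_F$ supplied by the cited results of \cite{CPZ} (Theorem 2.11, Corollary 2.13, Example 2.15) applied to $\Pic(X) \cong \Z/a_1\Z \oplus \cdots \oplus \Z/a_s\Z \oplus \Z^{m'-s}$. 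You instead prove, from scratch via a set-theoretic section, the structural statement that for any surjection $M \to M''$ with kernel $K$ the induced map $R\llbracket M\rrbracket_F \to R\llbracket M''\rrbracket_F$ is surjective with kernel the closed ideal generated by the $x_k$, $k \in K$; your verification that the section respects the defining relations (via $x_{s(m_1''+m_2'')} = x_{s(m_1'')} +_F x_{s(m_2'')} +_F x_{k_0}$ with $x_{k_0} \equiv 0$) and that the two maps are mutually inverse on topological generators is sound. Your route is more intrinsic and basis-free, and it makes the mechanism (killing the $x_k$ for $k$ in the kernel) transparent; the paper's route is shorter on the page because it outsources exactly this quotient computation to \cite{CPZ} and gains an explicit presentation of $R\llbracket\Pic(X)\rrbracket_F$ as a bonus. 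One caveat applies equally to both arguments and deserves a sentence in a final write-up: the tensor product $A \otimes_{R\llbracket T^*\rrbracket_F} R$ is $A/\mathfrak{m}A$ for the \emph{ordinary} ideal generated by the images of $x_{u_1},\dots,x_{u_m}$ (a basis of $T^*$), whereas your general fact and the definition of $J_F$ involve \emph{closed} ideals; you flag this but do not prove that the finitely generated ordinary ideal is already closed, and this is the only point where "unwinding definitions" is not quite enough (it holds, e.g., when $R$ is Noetherian, and the paper elides the same point when it says the lemma "follows immediately").
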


\begin{proof}
First, if we identify the lattices $T^*$ and $\CDiv_T(X)$ with $\Z^m$ and $\Z^{m'}$ respectively, the homomorphism $T^* \rightarrow \CDiv_T(X)$ is given by a $m' \times m$ matrix with coefficients in $\Z$. Since every matrix with coefficients in $\Z$ has a Smith normal form, it means that we can choose a new $\Z$-basis $\{u_1, \ldots, u_m\}$ of $T^*$ and a new $\Z$-basis $\{u'_1, \ldots, u'_{m'}\}$ of $\CDiv_T(X)$ such that the homomorphism is given by
\[
u_i \longmapsto \begin{cases} a_i u'_i & \mbox{ if } 1 \leq i \leq s \\
0 & \mbox{ if } s + 1 \leq i \leq m \end{cases}
\]
for some integer $s$, and $a_1 | \cdots | a_s$ are positive integers (notice that the value 1 is allowed). Then $\Pic(X)$ is isomorphic to $\Z/a_1 \Z \oplus \cdots \oplus \Z/a_s \Z \oplus \Z^{ m' - s}$. 

By using Theorem 2.11, Corollary 2.13 and Example 2.15 of \cite{CPZ},
\begin{align*}
R\llbracket T^* \rrbracket_F & \cong R\llbracket x_1, \ldots, x_m \rrbracket \\
R\llbracket \CDiv_T(X) \rrbracket_F & \cong R\llbracket x'_1, \ldots, x'_{m'} \rrbracket \\
R\llbracket \Pic(X) \rrbracket_F & \cong R\llbracket x'_1, \ldots, x'_{m'} \rrbracket/\langle a_i \cdot_F x'_i | i = 1, \ldots s \rangle.
\end{align*}
The formal group ring homomorphism induced by $T^* \rightarrow \CDiv_T(X)$ is given by
\[
x_i \longmapsto \begin{cases} a_i \cdot_F x'_i & \mbox{ if } 1 \leq i \leq s \\
0 & \mbox{ if } s + 1 \leq i \leq m, \end{cases}
\]
and our lemma follows immediately.
\end{proof}

\begin{corollary}
\label{ordcoho}
The following $R$-algebras are isomorphic:
\begin{enumerate}[1.]
\item $(R\llbracket \CDiv_T(X) \rrbracket_F/I_\Sigma) \otimes_{R\llbracket T^* \rrbracket_F} R$.

\item $R\llbracket \CDiv_T(X) \rrbracket_F/J_\Sigma$.

\item $R\llbracket \Pic(X) \rrbracket_F/\overline{I_\Sigma}$, where $\overline{I_\Sigma}$ is the image of $I_\Sigma$ under the surjective homomorphism $R\llbracket \CDiv_T(X) \rrbracket_F \rightarrow R\llbracket \Pic(X) \rrbracket_F$.
\end{enumerate}

The $R$-algebras above are our algebraic model of $\mathsf{h}(X)^\wedge$.
\end{corollary}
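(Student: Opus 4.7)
The plan is to establish the chain of isomorphisms by proving $(1) \cong (2)$ and $(2) \cong (3)$ separately; the closing assertion that these algebras model $\mathsf{h}(X)^\wedge$ is justified by the heuristic, recalled in the paragraphs preceding the corollary, that non-equivariant cohomology is obtained from equivariant cohomology by base change along the augmentation $\mathsf{h}_T(\pt)^\wedge \to \mathsf{h}(\pt)$, together with Theorem \ref{gluing}.

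First I would handle $(1) \cong (2)$, which is essentially the displayed isomorphism asserted just before Lemma \ref{lemmapicgp}. Tensoring over $R\llbracket T^* \rrbracket_F$ with $R$ via the augmentation map amounts to quotienting by the augmentation ideal $\ker(\epsilon')$. Under the power-series identification of $R\llbracket T^* \rrbracket_F$ with $R\llbracket x_1,\dots,x_m\rrbracket$ (Corollary 2.13 of \cite{CPZ}), this ideal is generated as an ordinary ideal by finitely many $x_{\alpha_i}$'s corresponding to a basis of $T^*$. Their images in $R\llbracket \CDiv_T(X) \rrbracket_F$ are the elements $\sum_\rho [\langle \alpha_i, v_\rho\rangle]_F x_{D_\rho}$, which together with $I_\Sigma$ generate $J_\Sigma$. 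The tensor product therefore equals $R\llbracket \CDiv_T(X) \rrbracket_F / J_\Sigma$.

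For $(2) \cong (3)$ I would apply right exactness of $-\otimes_{R\llbracket T^* \rrbracket_F} R$ to the short exact sequence of $R\llbracket T^* \rrbracket_F$-modules
\[
0 \longrightarrow I_\Sigma \longrightarrow R\llbracket \CDiv_T(X) \rrbracket_F \longrightarrow R\llbracket \CDiv_T(X) \rrbracket_F/I_\Sigma \longrightarrow 0.
\]
By Lemma \ref{lemmapicgp} the middle term after tensoring becomes $R\llbracket \Pic(X) \rrbracket_F$, and the image of $I_\Sigma \otimes R$ inside it is, by construction, $\overline{I_\Sigma}$. Hence the cokernel is $R\llbracket \Pic(X) \rrbracket_F/\overline{I_\Sigma}$, which combined with $(1) \cong (2)$ closes the triangle.

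The main obstacle I anticipate is the interaction of tensor product with completion: a priori one worries that the algebraic tensor product produces a quotient by the closure of the relevant ideal rather than by the ideal itself. This is however harmless here because, as exploited in the proof of Lemma \ref{lemmapicgp}, $R\llbracket T^* \rrbracket_F$ is a power series ring in finitely many variables, so its augmentation ideal is finitely generated and base change along it is purely algebraic. With this point verified, the remainder of the argument is formal.
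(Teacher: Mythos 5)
Your proposal is correct and follows essentially the same route as the paper, which leaves the corollary's proof implicit: the isomorphism of (1) with (2) is exactly the displayed base-change identity stated just before Lemma \ref{lemmapicgp}, and the identification with (3) is obtained by tensoring the presentation of $R\llbracket \CDiv_T(X) \rrbracket_F/I_\Sigma$ and invoking Lemma \ref{lemmapicgp}, just as you do. Your explicit attention to why the augmentation ideal is finitely generated (so that tensoring is quotienting by an ordinary, not merely closed, ideal) is a worthwhile detail the paper glosses over.
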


\begin{remark}
If $F$ is a polynomial formal group law, the subring $R[T_\sigma^*]_F$ is an algebraic substitute of the equivariant cohomology ring $\mathsf{h}_T(U_\sigma)$ by homotopy invariance and the fact that $X$ is smooth. Then we want to ``glue" $R [T_\tau^*]_F$ over all maximal cones $\tau \in \Sigma_{max}$ together. By the definition of $R[T_\sigma^*]_F$ and Remark \ref{gluing}, the $R[T^*]_F$-algebra obtained from ``gluing" is $R[\CDiv_T(X)]_F/I_\Sigma$, which will be our algebraic model of $\mathsf{h}_T(X)$. As a result, $(R[\CDiv_T(X) ]_F/I_\Sigma) \otimes_{R[T^*]_F} R \cong R[\CDiv_T(X)]_F/J_\Sigma$ is an algebraic model of $\mathsf{h}(X)$.
\end{remark}

\begin{example}
(see \cite[Example 8.3]{KU}) As a first example we let $F$ to be any formal group law, and we consider $X = \P^n$, where $\Sigma \subseteq T_* \cong \Z^n$ is the complete fan consisting of the $n + 1$ rays $\rho_1, \ldots, \rho_{n+1}$ generated by $v_1 = e_1, \ldots, v_n = e_n, v_{n+1} = - e_1 - \cdots - e_n$. Here $\{e_1, \ldots, e_n\}$ is the standard basis of $\Z^n$. Then it is easy to see that 
\[
R\llbracket \CDiv_T(X) \rrbracket_F/I_\Sigma \cong R\llbracket x_1, \ldots, x_{n+1} \rrbracket / \langle x_1 \cdots x_{n+1} \rangle.
\]
Although the right hand side is independent of the formal group law $F$, the isomorphism depends on $F$, see \cite[Remark 2.14]{CPZ}.

Let $\{\alpha_1, \ldots, \alpha_n\}$ be the basis of $T^*$ dual to $\{e_1, \ldots, e_n\}$. The $R\llbracket T^* \rrbracket_F$-algebra structure of $R\llbracket x_1, \ldots, x_{n+1} \rrbracket / \langle x_1 \cdots x_{n+1} \rangle$ is given by
\begin{align*}
x_{\alpha_i} \mapsto \sum_{j = 1}^{n+1} [\langle \alpha_i, v_j \rangle]_F x_j &= x_i -_F x_{n+1} \\
&= x_i - x_{n+1} + x_i x_{n+1} f(x_i,x_{n+1}) \\
&= x_i - u_i x_{n+1}
\end{align*}
where $f$ is some power series determined by the formal group law $F$, and $u_i = 1 - x_i f(x_i, x_{n+1})$ is a unit in $R\llbracket x_1, \ldots, x_{n+1} \rrbracket / \langle x_1 \cdots x_{n+1} \rangle$. Therefore
\begin{align*}
R\llbracket \CDiv_T(X) \rrbracket_F/J_\Sigma &\cong \frac{R\llbracket x_1, \ldots, x_{n+1} \rrbracket}{\langle x_1 \cdots x_{n+1}, x_1 - u_1 x_{n+1}, \ldots, x_n - u_n x_{n+1} \rangle} \\
&\cong R\llbracket x_{n+1} \rrbracket / \langle x_{n+1}^{n+1} \rangle \\
&\cong R[ x_{n+1} ] / \langle x_{n+1}^{n+1} \rangle .
\end{align*}
\end{example}

\section{Comparison Results}

In the present section we compute the formal group rings of a smooth toric variety $X$ for different formal group laws. Then we compare them with known results of equivariant cohomology of smooth toric varieties to justify the validity of our models, and we also obtain new results.

\begin{example}
\label{exchow}
When $F$ is the additive formal group law $F(x,y) = x + y$ over $R$, we recall that $R[M]_F$ is isomorphic to the ring of symmetric powers $\Sym_R(M)$ over $R$. The corresponding oriented cohomology theory is the Chow ring of algebraic cycles modulo rational equivalence, with coefficient ring $CH^*(\pt) = \Z$.

Take $R = \Z$. For every maximal cone $\tau$, $\Z[T_\tau^*]_F$ is isomorphic to $\Sym_{\Z}(T_\tau^*)$, which can be viewed as the ring of integral polynomial functions on $\tau$. Then the above ``gluing" process means that the following $\Sym_{\Z}(T^*)$-algebras are isomorphic:
\begin{enumerate}[(a)]
\item $\Z[\CDiv_T(X)]_F/I_\Sigma$.

\item $\Z[x_{D_\rho}]/\langle \prod_{\rho \in S} x_{D_\rho} \rangle$, where $x_{D_\rho}$ are indeterminates over $\rho \in \Sigma(1)$, and the ideal is generated over all subsets $S \subseteq \Sigma(1)$ such that $S \nsubseteq \sigma(1)$ for any cone $\sigma$.

\item the algebra of integral piecewise polynomial functions on $\Sigma$.
\end{enumerate}
The isomorphism between (b) and (c) is given by mapping $x_{D_\rho}$ to the unique piecewise polynomial function $\varphi_\rho$ satisfying
\begin{enumerate}[(i)]
\item $\varphi_\rho$ is homogeneous of degree 1, 

\item $\varphi_\rho(v_\rho) = 1$, $\varphi_\rho(v_{\rho'}) = 0$ for all $\rho' \in \Sigma(1), \rho' \neq \rho$.
\end{enumerate}
This coincides with the description of the equivariant Chow ring $CH^*_T(X)$ by \cite{B2} and \cite{P}.
\end{example}

\begin{remark}
It is known that for any smooth variety $X$, the natural homomorphism $\Pic(X) \rightarrow CH_{n-1}(X)$ is an isomorphism, where $CH_{n-1}(X)$ is the (usual) Chow group of $(n-1)$-cycles modulo rational equivalence. When $X$ is a smooth toric variety, this isomorphism can be recovered as follows:

First note that since $X$ is smooth, we have $CH^1(X) = CH_{n-1}(X)$. When $F$ is the additive formal group law, we can modify the proof of Lemma \ref{lemmapicgp} to show that $R[\CDiv_T(X)]_F \otimes_{R[T^*]_F} R \cong R[\Pic(X)]_F$. Therefore $\Z[\Pic(X)]_F/\overline{I_\Sigma}$ coincides with $CH^*_T(X) \otimes_{\Sym_{\Z}(T^*)} \Z = CH^*(X)$, the usual Chow ring of $X$. Then our result follows by comparing the degree 1 elements of the two graded rings, where $R[\Pic(X)]_F/\overline{I_\Sigma}$ is given the natural grading as a quotient of a polynomial ring.
\end{remark}

\begin{example}
When $F$ is the multiplicative periodic formal group law $F(x,y) = x + y - \beta xy$ over $R$, where $\beta \in R^\times$, we have seen that $R[M]_F$ is isomorphic to the group ring $R[M] = \{\sum r_i e^{\lambda_i} \mbox{ } | \mbox { } r_i \in R, \lambda_i \in M\}$. The corresponding oriented cohomology theory is the $K$-theory that assigns every smooth variety $Y$ to $K^0(Y)[\beta, \beta^{-1}]$, where $K^0(Y)$ denotes the Grothendieck group of vector bundles on $Y$. The coefficient ring is $K^0(\pt)[\beta, \beta^{-1}] = \Z[\beta, \beta^{-1}]$.

Take $R = \Z$ and $\beta = 1$. For every maximal cone $\tau$, $\Z[T_\tau^*]_F$ can be viewed as the ring of integral exponential functions on $\tau$. Therefore the following $\Z[T^*]$-algebras are isomorphic:
\begin{enumerate}[(a)]
\item $\Z[\CDiv_T(X)]_F/I_\Sigma$.

\item $\Z[e^{\pm D_\rho}]/\langle \prod_{\rho \in S} (1 - e^{D_\rho}) \rangle$, where the ideal is generated over all subsets $S \subseteq \Sigma(1)$ such that $S \nsubseteq \sigma(1)$ for any cone $\sigma$.

\item the algebra of integral piecewise exponential functions on $\Sigma$.
\end{enumerate}
The isomorphism between (b) and (c) is given by mapping $1 - e^{D_\rho}$ to the piecewise function $1 - e^{\varphi_\rho}$, where the notation means that on each cone $\sigma \in \Sigma$,
\[
(1 - e^{\varphi_\rho})_\sigma = 1 - e^{(\varphi_\rho)_\sigma},
\]
and $\varphi_\rho$ is the piecewise polynomial function defined in the previous example. This description agrees with that of the Grothendieck group of equivariant vector bundles $K^0_T(X)$ by \cite{AP} and \cite[Theorem 6.4]{VV}.
\end{example}

\begin{example}
Let $F$ be the multiplicative formal group law over $R$, given by $F(x,y) = x + y - v xy$, where $v$ is not required to be a unit. If $v = \beta \in R^\times$, then clearly we obtain the multiplicative periodic formal group law of the previous example. If $v \notin R^\times$, then the multiplicative formal group law is non-periodic. In particular, if $v = 0$ we get the additive formal group law.

The oriented cohomology theory corresponding to $F$ is the connective $K$-theory. It is the universal oriented cohomology theory for Chow ring and $K$-theory, by specializing at $v = 0$ and $v = \beta \in R^\times$ respectively. The coefficient ring for the connective $K$-theory is $\Z[v]$.

The following construction is motivated by the result in \cite{G}. Consider the group ring $R[M] = \{\sum r_i e^{\lambda_i} \mbox{ } | \mbox { } r_i \in R, \lambda_i \in M\}$, and let $\tr: R[M] \rightarrow R$ be the trace map, the $R$-linear map defined by mapping any $e^\lambda$ to 1. The ideal $\mathfrak{I} = \ker(\tr)$ is generated by $1 - e^\lambda$ over $\lambda \in M$. Then we consider the Rees ring of $R[M]$ with respect to $\mathfrak{I}$
\[
\mathfrak{R} = \Rees(R[M],\mathfrak{I}) = \sum_{n = -\infty}^{\infty} \mathfrak{I}^n t^{-n} = R[M][t, \mathfrak{I} t^{-1}] \subseteq R[M][t, t^{-1}]
\]
where $t$ is an indeterminate, and $\mathfrak{I}^n = R[M]$ if $n \leq 0$. We have the $R$-algebra isomorphisms
\[
R[M]_F \cong \mathfrak{R}/(t - v)\mathfrak{R} \mbox{ and } R\llbracket M \rrbracket_F \cong (\mathfrak{R}/(t - v)\mathfrak{R})^\wedge
\]
induced by $x_\lambda \mapsto \overline{(1 - e^\lambda)t^{-1}}$, and $\overline{e^\lambda} \mapsto 1 - vx_\lambda, \overline{(1 - e^\lambda)t^{-1}} \mapsto x_\lambda$. Here the bar means the image of an element in the quotient ring $\mathfrak{R}/(t - v)\mathfrak{R}$, and $(\mathfrak{R}/(t - v)\mathfrak{R})^\wedge$ is the completion of $\mathfrak{R}/(t - v)\mathfrak{R}$ at the ideal generated by $\overline{(1 - e^\lambda)t^{-1}}$. Specializing at $v = 0$ and $v = \beta \in R^\times$, we have
\begin{align*}
\mathfrak{R}/t \mathfrak{R} &\cong \gr_{\mathfrak{I}} R[M] \\
\mathfrak{R}/(t - \beta) \mathfrak{R} &\cong R[M]
\end{align*}
where $\gr_{\mathfrak{I}} R[M]$ is the associated graded ring of $R[M]$ with respect to $\mathfrak{I}$. Notice that $\gr_{\mathfrak{I}} R[M]$ is also isomorphic to $\Sym_R(M)$ via $1 - e^\lambda \mapsto \lambda$. Therefore we recover the previous two examples.

As a simple, concrete example for the case $v \notin R^\times$ and $v \neq 0$, consider $R = \Z$, $v = 2$, and $M = \Z$. By direct computation we see that 
\[
\Z[\Z]_F \mbox{ } \cong \mbox{ } \Rees(\Z[\Z],\mathfrak{I})/(t - 2)\Rees(\Z[\Z],\mathfrak{I}) \mbox{ } \cong \mbox{ } \Z[x,x']/\langle x + x' - 2xx' \rangle
\]
where the second isomorphism is induced by $\overline{(1 - e^1)t^{-1}} \mapsto x, \overline{(1 - e^{-1})t^{-1}} \mapsto x'$.

Back to our study of toric varieties. Our ``gluing" process above shows that $\Z[v][\CDiv_T(X)]_F/I_\Sigma$ is isomorphic to a ring of tuples of elements in the quotient of Rees rings, where the compatability condition for the tuples of elements hold. This provides a conjecture for the equivariant connective $K$-theory ring of $X$.
\end{example}

\begin{example}
Let $\charact(R) \neq 2$, and let $F$ be the Lorentz formal group law over $R$, given by 
\[
F(x,y) = \frac{x + y}{1 + u^2 xy}
\]
for some $u \in R$. If $u = 0$, then we just recover the additive formal group law. If $u \neq 0$, then $F$ is an elliptic formal group law, and the corresponding oriented cohomology theory is an elliptic cohomology with coefficient ring $\Z[u^2]$. We remark that $F$ also appears in the theory of special relativity as the formula of relativistic addition of parallel velocities, where $u$ is taken to be $\frac{1}{c}$, the reciprocal of the speed of light. 

Even though $F$ is not a polynomial formal group law, we can still study $R[M]_F$. Once again we consider the group ring $R[M]$. Denote by $S$ the multiplicative subset of $R[M]$ generated by $e^\lambda + e^{- \lambda}$ over all $\lambda \in M$, and we let $\mathfrak{H}$ to be the $R$-subalgebra of $S^{-1}R[M]$ generated by $1$ and the ``hyperbolic tangents'' $\frac{e^\lambda - e^{- \lambda}}{e^\lambda + e^{- \lambda}}$ over all $\lambda \in M$. Finally we let $\mathfrak{I}$ be the ideal of $\mathfrak{H}$ generated by all $\frac{e^\lambda - e^{- \lambda}}{e^\lambda + e^{- \lambda}}$, and similar to the previous example we consider the Rees ring of $\mathfrak{H}$ with respect to $\mathfrak{I}$
\[
\mathfrak{R} = \Rees(\mathfrak{H},\mathfrak{I}) = \sum_{n = -\infty}^{\infty} \mathfrak{I}^n t^{-n} = \mathfrak{H}[t, t^{-1}\mathfrak{I}] \subseteq \mathfrak{H}[t, t^{-1}].
\]
We have the $R$-algebra isomorphisms
\[
R[M]_F \cong \mathfrak{R}/(t - u)\mathfrak{R} \mbox{ and } R\llbracket M \rrbracket_F \cong (\mathfrak{R}/(t - u)\mathfrak{R})^\wedge
\]
induced by 
\[
x_\lambda \longmapsto \overline{\frac{e^\lambda - e^{- \lambda}}{e^\lambda + e^{- \lambda}}t^{-1}}
\]
and 
\[
\overline{\frac{e^\lambda - e^{- \lambda}}{e^\lambda + e^{- \lambda}}} \longmapsto u x_\lambda, \mbox{ } \overline{\frac{e^\lambda - e^{- \lambda}}{e^\lambda + e^{- \lambda}}t^{-1}} \longmapsto x_\lambda,
\]
where $\mathfrak{R}/(t - u)\mathfrak{R})^\wedge$ is the completion at the ideal generated by $\overline{\frac{e^\lambda - e^{- \lambda}}{e^\lambda + e^{- \lambda}}t^{-1}}$. Specializing at $u = 0$ and $u = \beta \in R^\times$, we get
\begin{align*}
\mathfrak{R}/t \mathfrak{R} &\cong \gr_{\mathfrak{I}} \mathfrak{H} \\
\mathfrak{R}/(t - \beta) \mathfrak{R} &\cong \mathfrak{H}.
\end{align*}
Notice that the associated graded ring $\gr_{\mathfrak{I}} \mathfrak{H}$ is naturally isomorphic to the ring of symmetric powers $\Sym_R(M)$ by construction, hence we recover the example for the additive formal group law again.

Just like the previous example, we see that for a smooth toric variety $X$ and the Lorentz formal group law $F$, $\Z[u^2][\CDiv_T(X)]_F/I_\Sigma$ is isomorphic to a ring of tuples of elements in the quotient of Rees rings, where the compatability condition for the tuples of elements hold.
\end{example}

\begin{example}
\label{ufgl}
When $F$ is the universal formal group law, the corresponding oriented cohomology theory is the algebraic cobordism (defined over a base field of characteristic 0). The coefficient ring is the Lazard ring $\L$. 

Similar to the first two examples, our ``gluing" process shows that the following $\L \llbracket T^* \rrbracket_F$-algebras are isomorphic:
\begin{enumerate}[(a)]
\item $\L \llbracket \CDiv_T(X) \rrbracket_F/I_\Sigma$.

\item $\L \llbracket x_{D_\rho} \rrbracket/\langle \prod_{\rho \in S} x_{D_\rho} \rangle$, where $x_{D_\rho}$ are indeterminates over $\rho \in \Sigma(1)$, and the ideal is generated over all subsets $S \subseteq \Sigma(1)$ such that $S \nsubseteq \sigma(1)$ for any cone $\sigma$.

\item the algebra of piecewise power series on $\Sigma$ with coefficients in $\L$.
\end{enumerate}
Our result agrees with the description of the equivariant cobordism ring $\Omega^*_T(X)$ in \cite{GK} and \cite{KU}.
\end{example}

To conclude this section we have a concrete example demonstrating how the above comparison results apply.
 
\begin{example}
Consider the del Pezzo surface of degree 6 $dP_6$, obtained by blowing-up the three $T$-fixed points $p_1, p_2, p_3$ of $\P^2$. We begin by recalling some classical results of $dP_6$ (for example, see \cite{M}). The fan $\Sigma$ of $dP_6$, $\Sigma \subseteq T_* \cong \Z^2$, consists of six 2-dimensional maximal cones and all their faces, and the rays are generated by $(0,1), (1,1), (1,0), (0,-1), (-1,-1), (-1,0)$:
\begin{center}
\begin{pspicture}(-55,-55)(55,55)
\psline(0,0)(40,0)
\psline(0,0)(40,40)
\psline(0,0)(0,40)
\psline(0,0)(0,-40)
\psline(0,0)(-40,-40)
\psline(0,0)(-40,0)
\rput(0,50){$L_1$}
\rput(50,50){$E_3$}
\rput(50,0){$L_2$}
\rput(0,-50){$E_1$}
\rput(-50,-50){$L_3$}
\rput(-50,0){$E_2$}
\end{pspicture}
\end{center}
The $T$-invariant divisors corresponding to the six rays are precisely the six exceptional curves $E_1, E_2, E_3, L_1, L_2, L_3$ on $dP_6$: $E_i$ is the exceptional curve induced by blowing-up $p_i$, and $L_i$ is the strict transform of the unique line in $\P^2$ passing through $p_j$ and $p_k$, $i, j, k$ all distinct.

$\{E_1, E_2, E_3, L_1, L_2, L_3\}$ is a basis of the lattice $\CDiv_T(dP_6)$, and $T^*$ injects into $\CDiv_T(dP_6)$ by
\begin{align*}
x &\longmapsto E_3 + L_2 - L_3 - E_2 \\
y &\longmapsto L_1 + E_3 - E_1 - L_3 
\end{align*}
where $\{x, y\}$ is the basis of $T^*$ dual to the standard basis $\{(1,0), (0,1)\}$ of $T_* \cong \Z^2$. It follows that $\Pic(dP_6)$ is a rank 4 lattice with basis $\{\ell, E_1, E_2, E_3\}$, where 
\[
\ell = L_1 + E_2 + E_3 = L_2 + E_3 + E_1 = L_3 + E_1 + E_2.
\]
The intersection pairing $\langle \mbox{ }, \mbox{ } \rangle: \Pic(dP_6) \times \Pic(dP_6) \rightarrow \Z$ is determined by 
\[
\langle \ell, \ell \rangle = 1, \mbox{ } \langle \ell, E_i \rangle = 0, \mbox{ } \langle E_i, E_j \rangle = -\delta_{ij}
\]
where $\delta_{ij}$ is the usual Kronecker delta function.

Now we go back to our study of equivariant cohomology on $dP_6$. For all the expressions below, we always assume the subindices $i, j \in \{1, 2, 3\}, i \neq j$.

\vspace{3mm}
\noindent
\textbf{(i)} When $F$ is the additive formal group law over $\Z$, the following $\Z[x,y]$-algebras are isomorphic:
\begin{enumerate}[(a)]
\item $CH^*_T(dP_6)$.

\item $\Z[\CDiv_T(dP_6)]_F/I_\Sigma$.

\item $\Z[L_1, L_2, L_3, E_1, E_2, E_3]/\langle L_i L_j, E_i E_j, L_i E_i \rangle$.

\item the algebra of integral piecewise polynomial functions on $\Sigma$.
\end{enumerate}

As a corollary, the following rings are isomorphic:
\begin{enumerate}[(a)]
\item $CH^*(dP_6)$.

\item $(\Z[\CDiv_T(dP_6)]_F/I_\Sigma) \otimes_{\Z[x,y]} \Z$.

\item $\Z[\Pic(dP_6)]_F/\overline{I_\Sigma}$.

\item $\Z[\ell, E_1, E_2, E_3]/\langle \ell^2 + E_i^2, E_i E_j, \ell E_i \rangle$. 
\end{enumerate}
Notice that the relations $\ell^2 + E_i^2 = E_i E_j = \ell E_i = 0$ in the last ring agree with the values of the intersection pairing on $\Pic(dP_6)$.

\vspace{3mm}
\noindent
\textbf{(ii)} When $F$ is the multiplicative periodic formal group law over $\Z$ with $\beta = 1$, the following $\Z[e^{\pm x}, e^{\pm y}]$-algebras are isomorphic:
\begin{enumerate}[(a)]
\item $K^0_T(dP_6)$.

\item $\Z[\CDiv_T(dP_6)]_F/I_\Sigma$.

\item $\displaystyle \frac{\Z[e^{\pm L_1}, e^{\pm L_2}, e^{\pm L_3}, e^{\pm E_1}, e^{\pm E_2}, e^{\pm E_3}]}{\langle (1 - e^{L_i})(1 - e^{L_j}), (1 - e^{E_i})(1 - e^{E_j}), (1 - e^{L_i})(1 - e^{E_i}) \rangle}$.

\item the algebra of integral piecewise exponential functions on $\Sigma$.
\end{enumerate}

Similar to part (i), we see that the following rings are isomorphic:
\begin{enumerate}[(a)]
\item $K^0(dP_6)$.

\item $(\Z[\CDiv_T(dP_6)]_F/I_\Sigma) \otimes_{\Z[e^{\pm x}, e^{\pm y}]} \Z$.

\item $\displaystyle \frac{\Z[e^{\pm \ell}, e^{\pm E_1}, e^{\pm E_2}, e^{\pm E_3}]}{\langle (1 - e^{\ell})^2 + (1 - e^{- E_i})^2, (1 - e^{E_i})(1 - e^{E_j}), (1 - e^{\ell})(1 - e^{E_i}) \rangle}$.
\end{enumerate}

\vspace{3mm}
\noindent
\textbf{(iii)} In general, for arbitrary formal group law $F$ over a ring $R$, we have
\begin{equation}
\label{fgrdp6}
R\llbracket \CDiv_T(dP_6) \rrbracket_F/I_\Sigma \cong \frac{R\llbracket x_{L_1}, x_{L_2}, x_{L_3}, x_{E_1}, x_{E_2}, x_{E_3} \rrbracket}{\langle x_{L_i} x_{L_j}, x_{E_i} x_{E_j}, x_{L_i} x_{E_i} \rangle}.
\end{equation}
There are a couple of useful arithmetic identities in this algebra. Recall that any formal group law $F$ can be expressed as
\[
F(x, y) = x + y - xy \cdot g(x,y)
\]
for some power series $g(x, y)$. Then for example we see that
\[
x_{E_i + E_j} = x_{E_i} +_F x_{E_j} = x_{E_i} + x_{E_j} - x_{E_i} x_{E_j} g(x_{E_i}, x_{E_j}) = x_{E_i} + x_{E_j},
\]
and similarly
\begin{align*}
x_{E_i - E_j} = x_{E_i} -_F x_{E_j} &= x_{E_i} +_F \chi(x_{E_j}) \\
&= x_{E_i} + \chi(x_{E_j}) - x_{E_i} \chi(x_{E_j}) g(x_{E_i}, \chi(x_{E_j})) \\
&= x_{E_i} + x_{-E_j},
\end{align*}
where $\chi(z)$ is the unique power series such that $z +_F \chi(z) = 0$. It follows that we have
\[
(x_{E_i + E_j + E_k})^n = (x_{E_i} + x_{E_j} + x_{E_k})^n = x_{E_i}^n + x_{E_j}^n + x_{E_k}^n  
\]
for any positive integer $n$. Clearly we have the corresponding identities for the $x_{L_i}$'s as well.

Tensoring the isomorphism \eqref{fgrdp6} with $R$ over $R\llbracket T^* \rrbracket_F$, we obtain
\begin{equation}
\label{fgrpicdp6}
R\llbracket \Pic(dP_6) \rrbracket_F/\overline{I_\Sigma} \cong R\llbracket x_{\ell}, x_{E_1}, x_{E_2}, x_{E_3} \rrbracket/\langle x_{\ell}^2 + \chi(x_{E_i})^2, x_{E_i} x_{E_j}, x_{\ell} x_{E_i} \rangle.
\end{equation}
By direct computation we see that 
\[
x_{\ell}^3 = x_{E_1}^3 = x_{E_2}^3 = x_{E_3}^3 = 0
\]
in this ring, which is an expected result as $dP_6$ is a surface. This allows us to simplify the expression in \eqref{fgrpicdp6}. Let $a_{i,j} \in R$ be the coefficients of the formal group law $F$
\[
F(x, y) = \sum_{i,j} a_{i,j} x^i y^j.
\]
Then by \cite[Equation (2.3)]{LM} the power series $\chi(z)$ is of the form
\[
\chi(z) = -z + a_{1,1}z^2 - (a_{1,1})^2 z^3 + \mbox{ terms of degree } \geq 4.
\]
Hence we have
\[
R\llbracket \Pic(dP_6) \rrbracket_F/\overline{I_\Sigma} \cong R\llbracket x_{\ell}, x_{E_1}, x_{E_2}, x_{E_3} \rrbracket/\langle x_{\ell}^2 + x_{E_i}^2, x_{E_i} x_{E_j}, x_{\ell} x_{E_i} \rangle.
\] 
We remark that the isomorphism depends on the formal group law $F$, even though the right hand side is independent of $F$.
\end{example}

\section{Pull-back and push-forward formula of blow-up}

Let $X$ be the smooth toric variety of the fan $\Sigma$, $\sigma \in \Sigma \backslash \Sigma(1)$ be a cone, and $X'$ be the blow-up of $X$ along the orbit closure $\overline{O_\sigma}$. Then $X'$ is smooth toric variety whose fan $\Sigma'$ is equal to the star subdivision of $\Sigma$ relative to $\sigma$,
\[
\Sigma' = \{\theta \in \Sigma \mbox{ }|\mbox{ } \sigma \nsubseteq \theta\} \cup \bigcup_{\sigma \subseteq \theta} \Sigma^*(\theta)
\]
where we let $v_\sigma = \sum_{\rho \in \sigma(1)} v_\rho$, and
\[
\Sigma^*(\theta) = \{\cone(S) \mbox{ }|\mbox{ } S \subseteq \{v_\sigma\} \cup \theta(1), \mbox{ } \sigma(1) \nsubseteq S\}.
\]
The fan $\Sigma'$ is a refinement of $\Sigma$, and the induced toric morphism $\pi: X' \rightarrow X$ is projective. Therefore for every equivariant cohomology theory $\mathsf{h}_T$, $\pi$ induces the pull-back homomorphism $\pi^*: \mathsf{h}_T(X) \rightarrow \mathsf{h}_T(X')$ and also the push-forward homomorphism $\pi_*: \mathsf{h}_T(X') \rightarrow \mathsf{h}_T(X)$. Notice that $\pi^*$ is a $\mathsf{h}_T(\pt)$-algebra homomorphism, and $\pi_*$ is a $\mathsf{h}_T(X)$-module homomorphism, where $\mathsf{h}_T(X)$ acts on $\mathsf{h}_T(X')$ via $\pi^*$. Our goal in this section is to define two homomorphisms of formal group rings that will serve as algebraic substitutes of the pull-back and the push-forward homomorphisms. We remark that formulas for the pull-back and the push-forward of equivariant Chow rings are proved in \cite[Theorem 2.3]{B1}.

First, let $E$ be the $T$-invariant prime divisor on $X'$ corresponding to the ray $\widetilde{\rho}$ generated by $v_\sigma$. Then we have
\[
\CDiv_T(X') \cong \CDiv_T(X) \oplus \Z \cdot E,
\]
where the $T$-invariant prime divisor $D_{\rho, \Sigma'}$ on $X'$ corresponding to $\rho \in \Sigma'$ is the strict transform of the divisor $D_{\rho, \Sigma}$ on $X$ corresponding to $\rho \in \Sigma$. We denote both $D_{\rho, \Sigma'}$ and $D_{\rho, \Sigma}$ by $D_\rho$ if the ambient fan is clear from the context. 

Next, we want to define the pull-back homomorphism for formal group rings. Informally speaking from the point of view of piecewise functions on fans, $\pi^*: \mathsf{h}_T(X) \rightarrow \mathsf{h}_T(X')$ is given by treating piecewise functions on $\Sigma$ as piecewise functions on the refinement $\Sigma'$. Translating this to the language of formal group rings, we define
\[
\begin{tabular}{rcl}
$\pi^*: R\llbracket \CDiv_T(X) \rrbracket_F/I_\Sigma$ & $\longrightarrow$ & $R\llbracket \CDiv_T(X') \rrbracket_F/I_{\Sigma'}$ \\
$x_{D_\rho}$ & $\longmapsto$ & $\begin{cases} x_{D_\rho} & \mbox{if } \rho \notin \sigma(1) \\ x_{D_\rho + E} = x_{D_\rho} +_F x_E & \mbox{if } \rho \in \sigma(1). \end{cases}$ \\
\end{tabular}
\]
The underlying geometric meaning can again be explained by the Orbit-Cone Correspondence: $\rho \in \sigma(1)$ if and only if $\overline{O_\sigma} \subseteq D_\rho$.

For the push-forward homomorphism, we impose the condition that $F$ is the associated formal group law of a birationally invariant theory $\mathsf{h}$, i.e the push-forward of the fundamental class satisfies $f_*(1_Y) = 1_X$ for any birational projective morphism $f: Y \rightarrow X$ between smooth irreducible varieties. Examples of birationally invariant theories include Chow ring over an arbitrary field, $K$-theory over a field of characteristic 0. It is known that the connective $K$-theory over a field of characteristic 0 is univeral among all birationally invariant theories, see \cite[Theorem 4.3.9]{LM} and \cite[Example 8.10]{CPZ}. Therefore from now on we assume $F$ is of the form $F(x,y) = x + y - vxy$ for some $v \in R$.

Now we begin the construction of the push-forward $\pi_*: R\llbracket \CDiv_T(X') \rrbracket_F/I_{\Sigma'} \rightarrow R\llbracket \CDiv_T(X) \rrbracket_F/I_\Sigma$, which is a homomorphism of $R\llbracket \CDiv_T(X) \rrbracket_F/I_\Sigma$-modules. As the blow-up morphism $\pi$ is birational and projective, by our assumption on $F$ we define $\pi_*(1) = 1$. Since $D_{\rho, \Sigma'}$ is the strict transform of $D_{\rho, \Sigma}$, we define $\pi_*(x_{D_{\rho, \Sigma'}}) = x_{D_{\rho, \Sigma}}$.

From here we can deduce that $\pi_*(-_F x_{E}) = 0$, and the proof is as follows: Let $\rho \in \sigma(1)$ be a ray in $\sigma \subseteq \Sigma$, and consider $x_{D_{\rho, \Sigma'}} +_F x_E = \pi^*(x_{D_{\rho, \Sigma}})$. By the projection formula, 
\begin{equation}
\label{projform}
\pi_*(x_{D_{\rho, \Sigma'}} +_F x_E) = \pi_*(\pi^*(x_{D_{\rho, \Sigma}})) = \pi_*(1) x_{D_{\rho, \Sigma}} = x_{D_{\rho, \Sigma}}.
\end{equation}
On the other hand, 
\[
\pi_*\Big((x_{D_{\rho, \Sigma'}} +_F x_E) -_F x_E\Big) = \pi_*(x_{D_{\rho, \Sigma'}}) = x_{D_{\rho, \Sigma}}
\]
as well. By subtracting the two equations, we get 
\begin{align*}
\pi_*\Big((-_F x_E) - v (x_{D_{\rho, \Sigma'}} +_F x_E)(-_F x_E)\Big) &= \pi_*\Big(\big(1 - v (x_{D_{\rho, \Sigma'}} +_F x_E)\big)(-_F x_E)\Big) \\
&= \pi_*\Big(\pi^*\big(1 - v x_{D_{\rho, \Sigma}}\big)(-_F x_E)\Big) \\
&= (1 - v x_{D_{\rho, \Sigma}}) \pi_*(-_F x_E) \\
&= 0.
\end{align*}
As $1 - v x_{D_{\rho, \Sigma}}$ is a unit in $R\llbracket \CDiv_T(X) \rrbracket_F/I_\Sigma$, we have $\pi_*(-_F x_{E}) = 0$. Notice that in general $\pi_*(x_{E}) \neq 0$.

If $\dim \sigma = 2$, then by the property of a $R\llbracket \CDiv_T(X) \rrbracket_F/I_\Sigma$-module homomorphism we can already determine the image of every element in $R\llbracket \CDiv_T(X') \rrbracket_F/I_{\Sigma'}$. Let $\sigma(1) = \{\rho_1, \rho_2\} \subseteq \Sigma(1)$. After the star subdivision $\{\rho_1, \rho_2\} \nsubseteq \theta(1)$ for any cone $\theta \in \Sigma'$, hence $x_{D_{\rho_1, \Sigma'}} x_{D_{\rho_2, \Sigma'}} = 0$ in $R\llbracket \CDiv_T(X') \rrbracket_F/I_{\Sigma'}$. Then for example,
\begin{align}
\label{dim2eq} \pi^*(x_{D_{\rho_1, \Sigma}}) x_{D_{\rho_2, \Sigma'}} &= (x_{D_{\rho_1, \Sigma'}} + x_E - v x_{D_{\rho_1, \Sigma'}} x_E) x_{D_{\rho_2, \Sigma'}} \\
&= x_E x_{D_{\rho_2, \Sigma'}} \nonumber \\
&\mapsto \pi_*(\pi^*(x_{D_{\rho_1, \Sigma}}) x_{D_{\rho_2, \Sigma'}}) = x_{D_{\rho_1, \Sigma}} x_{D_{\rho_2, \Sigma}}. \nonumber 
\end{align}
It follows from equations \eqref{projform} and \eqref{dim2eq} that
\begin{align*}
\pi_*(x_{D_{\rho_2, \Sigma'}} +_F x_E) &= x_{D_{\rho_2, \Sigma}} + \pi_*(x_E) - v x_{D_{\rho_1, \Sigma}} x_{D_{\rho_2, \Sigma}} \\
&= x_{D_{\rho_2, \Sigma}},
\end{align*}
therefore $\pi_*(x_E) = v x_{D_{\rho_1, \Sigma}} x_{D_{\rho_2, \Sigma}}$.

\begin{theorem}
\label{dim2push}
Let $\dim \sigma = 2$ and $\sigma(1) = \{\rho_1, \rho_2\} \subseteq \Sigma(1)$. We use $x_i$ to denote both $x_{D_{\rho_i, \Sigma'}}$ and $x_{D_{\rho_i, \Sigma}}$, $i = 1, 2$. Then the push-forward homomorphism defined above satisfies
\[
\begin{tabular}{rcl}
$\pi_*: R\llbracket \CDiv_T(X') \rrbracket_F/I_{\Sigma'}$ & $\longrightarrow$ & $R\llbracket \CDiv_T(X) \rrbracket_F/I_\Sigma$ \\
$1$ & $\longmapsto$ & $1$ \\
$x_E$ & $\longmapsto$ & $v x_1 x_2$ \\
$x_E^n$ & $\longmapsto$ & $\displaystyle v \sum_{i=1}^{n} x_1^{n+1-i} x_2^i - \sum_{i=1}^{n-1} x_1^{n-i} x_2^i$ \\
$x_a^s x_E^t$ & $\longmapsto$ & $x_a x_b^t (x_a -_F x_b)^{s-1}$, \\
\end{tabular}
\]
where $n \geq 2, s \geq 1, t \geq 0$, and $\{a, b\} = \{1, 2\}$. 
\end{theorem}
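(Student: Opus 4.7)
The plan is to deduce everything from the projection formula $\pi_*(\pi^*(y) \cdot z) = y \cdot \pi_*(z)$, the already-established values $\pi_*(1) = 1$, $\pi_*(x_{D_{\rho,\Sigma'}}) = x_{D_{\rho,\Sigma}}$, $\pi_*(x_E) = v x_1 x_2$, and the geometric relation $x_1 x_2 = 0$ that holds in $R\llbracket \CDiv_T(X') \rrbracket_F/I_{\Sigma'}$ because $\{\rho_1,\rho_2\}$ no longer spans a common cone after the star subdivision. The first step is the identity
\[
x_E \cdot x_a = \pi^*(x_b) \cdot x_a \qquad \text{in } R\llbracket \CDiv_T(X') \rrbracket_F/I_{\Sigma'},
\]
valid for $\{a,b\} = \{1,2\}$, which follows by expanding $\pi^*(x_b) = x_b + x_E - v x_b x_E$ and using $x_a x_b = 0$. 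Multiplying by powers of $x_a$ yields $x_E^t \cdot x_a^s = \pi^*(x_b^t) \cdot x_a^s$ for all $s \geq 1$, $t \geq 0$, and the projection formula then reduces the mixed case to
\[
\pi_*(x_a^s x_E^t) = x_b^t \cdot \pi_*(x_a^s).
\]

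It therefore suffices to evaluate $\pi_*(x_a^s)$ for $s \geq 1$. I would apply the projection formula to $\pi^*(x_a) \cdot x_a^{s-1}$: the left-hand side becomes $x_a \pi_*(x_a^{s-1})$, while expanding $\pi^*(x_a) = x_a + x_E - v x_a x_E$ and substituting $x_E x_a^{s-1} = \pi^*(x_b) x_a^{s-1}$ in the resulting terms yields the recursion
\[
(1 - v x_b)\, \pi_*(x_a^s) = (x_a - x_b)\, \pi_*(x_a^{s-1}).
\]
A direct computation shows $(x_a - x_b)/(1 - v x_b) = x_a -_F x_b$ for the multiplicative formal group law, since $-_F x_b = -x_b/(1 - v x_b)$. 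Iterating from the base case $\pi_*(x_a) = x_a$ gives $\pi_*(x_a^s) = x_a (x_a -_F x_b)^{s-1}$, which combined with the previous display establishes the last line of the theorem.

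For the formula on $x_E^n$, I would apply the projection formula to $\pi^*(x_a) \cdot x_E^n$: the image under $\pi_*$ equals $x_a \pi_*(x_E^n)$, while expanding $\pi^*(x_a) = x_a + x_E - v x_a x_E$ and inserting the $s=1$ case $\pi_*(x_a x_E^m) = x_a x_b^m$ already proved gives the recursion
\[
\pi_*(x_E^{n+1}) = x_a\, \pi_*(x_E^n) - x_a x_b^n + v x_a x_b^{n+1}.
\]
Starting from $\pi_*(x_E) = v x_a x_b$, a straightforward induction on $n$ produces the claimed closed form $v \sum_{i=1}^n x_1^{n+1-i} x_2^i - \sum_{i=1}^{n-1} x_1^{n-i} x_2^i$. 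The main (and essentially the only) use of the hypothesis that $F$ is multiplicative is in recognizing $(x_a - x_b)/(1 - v x_b)$ as $x_a -_F x_b$, which is what makes the recursion for $\pi_*(x_a^s)$ close up into a clean product; without it one would not obtain such a simple answer in terms of $+_F$ and $-_F$.
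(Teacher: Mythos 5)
Your proof is correct, and it rests on the same two pillars as the paper's: the projection formula and the vanishing $x_1 x_2 = 0$ in $R\llbracket \CDiv_T(X') \rrbracket_F/I_{\Sigma'}$ after the star subdivision (equivalently, the identity $x_E x_a = \pi^*(x_b)\, x_a$). Your recursion for $\pi_*(x_E^n)$ is literally the paper's. Where you diverge is in the treatment of $x_a^s x_E^t$. The paper first proves $\pi_*(x_a x_E^t) = x_a x_b^t$ by induction on $t$, then computes $\pi_*(x_a^2)$ by writing $x_a = \pi^*(x_a) -_F x_E$ and expanding the formal inverse $\chi(x_E) = -\sum_i v^i x_E^{i+1}$ term by term against the already-known values $\pi_*(x_a x_E^{i+1})$, and finally escalates to general $s$ and $t$ ``by the same trick'' and induction. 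You instead reduce uniformly via $x_E^t x_a^s = \pi^*(x_b^t) x_a^s$, so that only $\pi_*(x_a^s)$ needs to be computed, and you get it from the first-order recursion $(1 - v x_b)\,\pi_*(x_a^s) = (x_a - x_b)\,\pi_*(x_a^{s-1})$ together with the observation that $(x_a - x_b)/(1 - v x_b) = x_a -_F x_b$ (using that $1 - v x_b$ is a unit in the complete ring). This is a cleaner and more self-contained organization: it avoids manipulating the power series $\chi$ explicitly and replaces the paper's somewhat sketchy double induction with a single recursion, while isolating exactly where the multiplicativity of $F$ enters. The only point worth flagging is cosmetic: your recursion for $\pi_*(x_a^s)$ requires $s \geq 2$ (so that $x_E x_a^{s-1}$ falls under the identity), with $s = 1$ supplied by the defining value $\pi_*(x_a) = x_a$; you should state this base case explicitly.
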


\begin{proof}
First, by induction on $t$ we see that
\begin{align}
\label{lemmaeq} \pi_*(x_a x_E^t) &= \pi_*((x_b + x_E - v x_b x_E)x_a x_E^{t-1}) \\
&= \pi_*(\pi^*(x_b) x_a x_E^{t-1}) \nonumber \\
&= x_a x_b^t, \nonumber 
\end{align}
where the first equality follows from the fact that $x_a x_b = 0$ in $R\llbracket \CDiv_T(X') \rrbracket_F/I_{\Sigma'}$.

To compute the image of $x_E^n$, we use induction on $n$, and compare the image of $\pi^*(x_a) x_E^{n-1}$ computed by two different methods:
\[
\pi_*(\pi^*(x_a) x_E^{n-1}) = x_a \pi_*(x_E^{n-1})
\]
and
\[
\pi_*(\pi^*(x_a) x_E^{n-1}) = \pi_*((x_a + x_E - v x_a x_E) x_E^{n-1}) = x_a x_b^{n-1} + \pi_*(x_E^n) - v x_a x_b^n.
\]

Next we want to compute the image of $x_a^2$. Let $\chi(z)$ be the unique power series such that $z +_F \chi(z) = 0$. Then
\begin{align*}
\pi_*(x_a^2) &= \pi_*((\pi^*(x_a) -_F x_E) x_a) \\
&= \pi_*((\pi^*(x_a) + \chi(x_E) - v \pi^*(x_a) \chi(x_E)) x_a) \\
&= x_a^2 + \chi(x_b) x_a - v x_a \chi(x_b) x_a \\ 
&= x_a (x_a -_F x_b),
\end{align*} 
where the third equality follows from equation \eqref{lemmaeq}. By using the same trick for $\pi_*(x_a x_E^t)$, we see that 
\[
\pi_*(x_a^2 x_E^t) = x_a x_b^t (x_a -_F x_b)
\]
for every $t \geq 0$. This allows us to compute $\pi_*(x_a^3)$, and then $\pi_*(x_a^3 x_E^t)$, by the same argument as above. The general case now follows from induction.
\end{proof}

\begin{remark}
In our case where the formal group law $F$ is of the form $F(x,y) = x + y - vxy$, we have an explicit description of the power series $\chi(z)$:
\[
\chi(z) = -z \sum_{i = 0}^\infty (vz)^i = - \sum_{i = 0}^\infty v^i z^{i+1}.
\]
Hence $-_F x_E = - \sum_{i = 0}^\infty v^i x_E^{i+1}$. Then by using the formula in Proposition \ref{dim2push}, we recover the result $\pi_*(-_F x_E) = 0$ when $\dim \sigma = 2$.
\end{remark}

If $\dim \sigma = 3$, let $\sigma(1) = \{\rho_1, \rho_2, \rho_3\} \subseteq \Sigma(1)$ and use $x_i$ to denote both $x_{D_{\rho_i, \Sigma'}}$ and $x_{D_{\rho_i, \Sigma}}$, $i = 1, 2, 3$. We further define 
\[
\pi_*(x_i x_j) = x_i x_j
\]
for every $i, j \in \{1, 2, 3\}$ such that $i \neq j$. Then the image for the rest of the elements in $R\llbracket \CDiv_T(X') \rrbracket_F/I_{\Sigma'}$ can be computed by the same technique as above. For example, $\pi_*(x_i x_j x_E) = x_1 x_2 x_3$, $\pi_*(x_i x_E) = v x_1 x_2 x_3$, and $\pi_*(x_E) = v^2 x_1 x_2 x_3$. Finally, we remark that this process of defining the push-forward homomorphism $\pi_*$ extends naturally for arbitrary dimension of $\sigma$.  

\vspace{3mm}

\noindent
\textit{Acknowledgments:} The author thanks Kirill Zainoulline for his suggestion on this project and his helpful comments, and Alexander Duncan for his help on toric varieties.

\end{document}